\documentclass[12pt]{amsart}
\usepackage{amsmath,amsthm,amsfonts,amssymb,latexsym}
\usepackage[pagebackref]{hyperref}
\usepackage{enumerate}
\usepackage[shortlabels]{enumitem}
\usepackage{mathabx}

\newcommand{\FF}{{\mathbb F}}
\newcommand{\KK}{{\mathbb K}}
\newcommand{\QQ}{{\mathbb Q}}
\newcommand{\ZZ}{{\mathbb Z}}

\newcommand{\bC}{{\mathbf{C}}}
\newcommand{\bG}{{\mathbf{G}}}
\newcommand{\bH}{{\mathbf{H}}}
\newcommand{\bL}{{\mathbf{L}}}
\newcommand{\bN}{{\mathbf{N}}}
\newcommand{\bP}{{\mathbf{P}}}
\newcommand{\bT}{{\mathbf{T}}}
\newcommand{\bU}{{\mathbf{U}}}
\newcommand{\bZ}{{\mathbf{Z}}}

\newcommand{\cG}{{\mathcal{G}}}
\newcommand{\cE}{{\mathcal{E}}}
\newcommand{\cJ}{{\mathcal{J}}}
\newcommand{\cS}{{\mathcal{S}}}

\newcommand{\Gal}{{\operatorname{Gal}}}
\newcommand{\Ind}{{\operatorname{Ind}}}
\newcommand{\Infl}{{\operatorname{infl}}}
\newcommand{\Irr}{{\operatorname{Irr}}}
\newcommand{\lcm}{{\operatorname{lcm}}}
\newcommand{\ord}{{\operatorname{ord}}}
\newcommand{\Res}{{\operatorname{Res}}}
\newcommand{\Syl}{{\operatorname{Syl}}}

\newcommand{\GL}{{\operatorname{GL}}}
\newcommand{\PGL}{{\operatorname{PGL}}}
\newcommand{\SL}{{\operatorname{SL}}}
\newcommand{\GU}{{\operatorname{GU}}}

\let\al=\alpha
\let\eps=\epsilon
\let\ze=\zeta

\newcommand{\tw}[1]{{}^{#1}\!}
\newcommand{\wt}{\widetilde}
\newcommand{\lev}{\mathrm{\mathbf{lev}}}

\newtheorem{theorem}{Theorem}[section]
\newtheorem{lemma}[theorem]{Lemma}

\newtheorem{corollary}[theorem]{Corollary}

\newtheorem{theorema}{Theorem}

\newtheorem{conjecturea}[theorema]{Conjecture}
\newtheorem{corollarya}[theorema]{Corollary}

\theoremstyle{definition}

\newtheorem{notation}[theorem]{Notation}
\newtheorem{remark}[theorem]{Remark}

\newtheorem*{example*}{Example}

\numberwithin{equation}{section}
\makeatletter \@namedef{subjclassname@2020}{\textup{2020}
	Mathematics Subject Classification} \makeatother

\begin{document}

\title[The $p$-rationality of Deligne--Lusztig characters]
{On the $p$-rationality of Deligne--Lusztig characters}

\author[Nguyen N. Hung]{Nguyen N. Hung}
\address{Department of Mathematics, The University of Akron, Akron,
OH 44325, USA}
\email{hungnguyen@uakron.edu}

\thanks{I am indebted to Gunter Malle for several helpful conversations leading to the proofs of Theorems~\ref{thm:main} and \ref{thm:p=r}. The proof of Theorem~\ref{thm:p=r} is essentially due to him. The author gratefully acknowledges support from the AMS--Simons Research Enhancement Grant (AWD-000167 AMS)}

\subjclass[2020]{Primary 20C33, 20C15}
\keywords{Character values, $p$-rationality, Deligne--Lusztig character,
  Lusztig induction, Lusztig series}


\begin{abstract}
Among finite simple groups, character values of alternating and sporadic groups
have relatively low irrationality at any prime $p$, whereas those of simple
groups of Lie type can have arbitrarily high $p$-irrationality. We provide
concrete evidence supporting this phenomenon. In particular, we show that if
$\chi:=R_{\bT}^{\bG}(\theta)$ is a Deligne--Lusztig character of a finite
reductive group $\bG^F$, with $\theta$ an irreducible character of a maximal
torus $\bT^F$, and if $\chi$ has degree prime to $p$, then the so-called
$p$-rationality level of $\chi$ coincides precisely with that of $\theta$.
We present further evidence suggesting that Lusztig induction
preserves $p$-rationality for characters of $p'$-degree.
\end{abstract}

\maketitle



\section{Introduction}

Studying the rationality of character values of finite groups is a classical
problem in group representation theory. Equally significant is the study of
rationality with respect to a fixed prime $p$, commonly referred to as the
$p$-rationality of character values.

In this paper, we are concerned with the $p$-rationality of Deligne--Lusztig
characters and, more generally, Lusztig induced characters of finite groups of
Lie type. By such a group we mean the group of fixed points $\bG^F$, where
$\bG$ is a connected reductive linear algebraic group defined over an algebraic
closure of a finite field of characteristic $r>0$, and $F:\bG\to\bG$ is a
Steinberg endomorphism. Deligne--Lusztig characters are virtual characters of
$\bG^F$ constructed via $l$-adic cohomology of certain varieties, and they
provide a systematic framework that
classifies (and in many cases explicitly constructs) all irreducible complex
representations of these groups. For the definition and significance of
Deligne--Lusztig characters in the representation theory of finite groups of
Lie type, we refer the reader to \cite[Chapter~7]{Carter85} or
\cite[Chapters~9-11]{Digne-Michel}.

As usual, let $\Irr(G)$ denote the set of irreducible complex characters of a
finite group $G$. Let $\chi\in\ZZ\Irr(G)$ be a virtual character of~$G$,
and let $\QQ(\chi)$ be its field of values. The \emph{conductor} of~$\chi$,
denoted $c(\chi)$, is the smallest positive integer~$n$ such that $\QQ(\chi)$
is contained in the $n$-th cyclotomic field $\QQ(\ze_n)$. The $p$-part of
$c(\chi)$ measures the extent to which the values of $\chi$ fail to be
$p$-rational. Accordingly, the $p$-adic valuation $\nu_p\bigl(c(\chi)\bigr)$
is called the \emph{$p$-rationality level} of $\chi$ and will be denoted by
$\lev(\chi)$ for simplicity, where the prime $p$ will be clear from the
context.

\subsection{Motivation} The notion of $p$-rationality, while interesting in
its own right, arises naturally in the context of several well-known problems
in the field. One of them is the McKay--Navarro conjecture \cite{Navarro04},
a refinement of the McKay conjecture \cite{McKay} whose proof was recently completed by
Cabanes and Sp\"ath \cite{CS24}. Ruhstorfer and Schaeffer Fry \cite{RS25},
building on
the reduction to finite simple groups of the McKay--Navarro conjecture by
Navarro, Sp\"{a}th, and Vallejo \cite{NSV20}, established a major consequence:
for a finite group $G$ and a Sylow $p$-subgroup $P$, there exists a bijection
between the sets of $p'$-degree irreducible characters of $G$ and of $\bN_G(P)$
that preserves the $p$-rationality level.

Another remarkable problem is a conjecture of Navarro and Tiep
\cite{Navarro-Tiep21} concerning the field of values of Sylow restrictions.
It asserts that if $\chi$ is a $p'$-degree irreducible character of $G$ and
$P\in\Syl_p(G)$, then
$\QQ(\ze_p)(\Res_P^G(\chi))=\QQ(\ze_{\nu_p(c(\chi))})$. In the language of
$p$-rationality, this is essentially equivalent to
$\lev(\chi)=\lev(\Res_P^G(\chi))$ whenever $\lev(\chi)\ge 2$.

One of the most viable to
many problems in group representation theory is to reduce them to the case of finite simple groups.
In some cases, this is the only approach currently known,
including for the problems mentioned above. Among finite
simple groups, contrasting phenomena occur. Characters of alternating groups
are almost $p$-rational for all primes $p$ (see \cite[\S2.5]{JK81}), while
those of sporadic simple groups also have relatively low $p$-irrationality
(\cite{Atlas,GAP}). On the other hand, characters of simple groups of
Lie type, the most challenging case, usually have higher
$p$-irrationality.

The main motivation for this work is to make concrete this phenomenon. At the same
time, we would like to understand the $p$-rationality
of Deligne--Lusztig characters, which serve as building blocks for all complex
irreducible characters of finite reductive groups. More generally, we study
how Lusztig induction affects the $p$-rationality of character values.

\subsection{Results}
Our first result shows that the $p$-rationality of a Deligne--Lusztig
character is essentially preserved under induction; more precisely, it
coincides with that of the character from which it is induced. Throughout,
we say that $\chi$ is \emph{almost $p$-rational} if $\lev(\chi)\le1$.

\begin{theorema}   \label{thm:main}
 Let $(\bG,F)$, $r$, and $p$ be as above. Let $\bT\le\bG$ be an $F$-stable
 maximal torus and $\theta\in \Irr(\bT^F)$. Let
 \[\chi:=R_{\bT}^{\bG}(\theta)\in\ZZ\Irr(\bG^F)\]
 be the Deligne--Lusztig character of $\bG^F$ associated to $\bT$ and $\theta$.
 Suppose that $\chi$ has $p'$-degree. Then
 \[ \lev(\chi) = \lev(\theta) \]
 whenever $\max\{\lev(\theta),\lev(\chi)\}\ge 2$. Otherwise, both $\chi$ and
 $\theta$ are almost $p$-rational.
\end{theorema}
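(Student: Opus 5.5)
The plan is to compare the fields of values of $\chi$ and $\theta$ through the action of Galois automorphisms, because $\lev$ is determined by which elements of $\Gal(\QQ(\ze_{p^\infty m})/\QQ)$ fix a character. For $\sigma$ in the absolute Galois group one has $R_{\bT}^{\bG}(\theta)^\sigma=R_{\bT}^{\bG}(\theta^\sigma)$. This holds because the Deligne--Lusztig character formula expresses $R_{\bT}^{\bG}(\theta)(g)$ through Green functions, which are rational-integer valued, and through values of $\theta$. Hence $\QQ(\chi)\subseteq\QQ(\theta)$, and so $\lev(\chi)\le\lev(\theta)$ whenever $\lev(\chi)\ge 2$ (and trivially otherwise up to the almost-rational threshold). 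The substance is the reverse inequality: if $\sigma$ fixes $\chi$, we need $\sigma$ to fix $\theta$ up to the ambiguity that only affects level $\le 1$.

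For the reverse direction I would use the scalar product formula for Deligne--Lusztig characters. If $\chi=\chi^\sigma=R_{\bT}^{\bG}(\theta^\sigma)$, then $\langle R_\bT^\bG(\theta),R_\bT^\bG(\theta^\sigma)\rangle=\langle\chi,\chi\rangle\neq0$. By the Mackey-type formula \cite[Cor.~11.15]{Digne-Michel}, this forces $\theta^\sigma={}^w\theta$ for some $w\in W_{\bG^F}(\bT):=N_{\bG^F}(\bT)/\bT^F$. So we must show that $\sigma$ acting by ${}^w$ on $\theta$ can only happen "trivially" at the $p$-part of the conductor. The $p'$-degree hypothesis enters here. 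We have $\chi(1)=\pm[\bG^F:\bT^F]_{r'}$, so $p\nmid\chi(1)$ means that either $p=r$ (a case I would treat separately) or $\bT^F$ contains a Sylow $p$-subgroup of $\bG^F$. In the latter case $\bT$ is, for $p$ odd, the centralizer of a Sylow $\Phi_d$-torus, and $W_{\bG^F}(\bT)$ acts on the Sylow $p$-part $\bT^F_p$.

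Next I write $\theta=\theta_p\theta_{p'}$. The $p$-rationality level of $\theta$ is that of $\theta_p$, namely $\log_p$ of its order when that order is at least $p^2$. Take $\sigma$ fixing $\ze_{n'}$ for the $p'$-part and sending $\ze_{p^a}\mapsto\ze_{p^a}^{1+p}$, or a suitable generator of the relevant subgroup. Then $\theta_p^\sigma=\theta_p^{k}$ with $k\equiv1\pmod p$ of $p$-power order. Since ${}^w$ has order prime to $p$ on the $p$-part in the relevant situation ($W_{\bG^F}(\bT)$ has $p'$-order when $\bT^F$ contains a Sylow $p$-subgroup, because $N_{\bG^F}(\bT)/\bT^F$ would otherwise enlarge the Sylow), the equation $\theta_p^{\sigma}={}^w\theta_p$ forces both to be trivial on the cyclic group generated by $\sigma$ modulo a $p'$ group. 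This gives $\theta_p^\sigma=\theta_p$, hence $\lev(\theta)\le\lev(\chi)$ once the level is at least $2$. The parity issue at $p=2$ and the case $\lev\le1$ explain the exception in the statement.

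The main obstacle is the case $p=r$, and more generally the precise control of $W_{\bG^F}(\bT)$. When $p=r$, the $p'$-degree condition holds for every torus, and $\bT^F$ is a $p'$-group. Then $\theta$ has values in $\QQ(\ze_m)$ with $p\nmid m$, so $\lev(\theta)=0$, and I need to show that $\chi$ is almost $p$-rational. This follows from the containment $\QQ(\chi)\subseteq\QQ(\theta)$ proved above, so the case is easy. The genuinely delicate point is justifying that $p\nmid|W_{\bG^F}(\bT)|$ in the case $p\ne r$, together with the $p=2$ subtleties where $\ze_4$ issues appear. I would handle these using Sylow theory inside $N_{\bG^F}(\bT)$ and then checking small cases directly.
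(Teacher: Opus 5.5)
Your argument is correct and takes a genuinely different route from the paper. The paper's proof does not use the scalar product of Deligne--Lusztig characters. For $p\neq r$ it writes $\chi(s)=\pm|\bC^\circ_\bG(s)^F|_r^{-1}\Ind_{\bT^F}^{\bG^F}(\theta)(s)$ for semisimple $s$. It then uses the $\ell$-invariant lemmas of Section~\ref{sec:invariant} (Lemma~\ref{lem:1}(2), for the linear character $\theta$ of the $p'$-index subgroup $\bT^F$) to show that $\Ind_{\bT^F}^{\bG^F}(\theta)$, restricted to $P\in\Syl_p(\bT^F)$, already has level $\lev(\theta)$. Since $P$ consists of semisimple elements, $\chi$ inherits this level.

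You compare Galois stabilizers instead, and your final coprimality step should be made explicit. The Galois and conjugation actions on $\Irr(\bT^F)$ commute, so $\theta^\sigma={}^w\theta$ gives $\theta^{\sigma^j}={}^{w^j}\theta$ for all $j$. Take $j$ to be the order of $w$; it is prime to $p$ because $|\bN_{\bG^F}(\bT):\bT^F|$ divides $|\bG^F:\bT^F|$. This yields $\theta^{\sigma^j}=\theta$, hence $\theta^\sigma=\theta$ whenever $\sigma$ has $p$-power order. Apply this to every element of the $p$-group $\Gal(\QQ(\ze_n)/\QQ(\ze_{p^bn_{p'}}))$, with $n=p|\bG^F|$ and $b=\max\{\lev(\chi),1\}$, not just to $\ze\mapsto\ze^{1+p}$. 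This gives $\lev(\theta)\le\max\{\lev(\chi),1\}$, which together with $\QQ(\chi)\subseteq\QQ(\theta)$ is the theorem.

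So nothing delicate remains. The fact $p\nmid|W_{\bG^F}(\bT)|$ is immediate, and $p=2$ needs no case checks, since that Galois group is a $2$-group even for $b=1$. Your aside that $\bT$ is the centralizer of a Sylow $\Phi_d$-torus is not needed, and it is false in general: for $\GL_5(2)$ with $p=7$, a maximal torus of order $7$ contains a Sylow $7$-subgroup but lies properly inside $\bC_\bG(\bS)^F\cong\GL_1(8)\times\GL_2(2)$.

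As for what each route buys: the paper's proof implicitly gives $\lev(\Res^{\bG^F}_P\chi)=\lev(\theta)$, i.e.\ the level is already detected on a Sylow $p$-subgroup, in the spirit of the Navarro--Tiep conjecture. Its lemmas also concern ordinary induction from $p'$-index subgroups, which is the tool needed for the Harish-Chandra case of Conjecture~\ref{conj:Lusztig-induction}. Your proof is shorter and avoids the $\ell$-invariant and the Isaacs--Navarro counting. However, it rests on the fact that $\langle R_\bT^\bG(\theta),R_\bT^\bG(\theta')\rangle\neq0$ forces $\theta'\in W_{\bG^F}(\bT)\cdot\theta$. That fails for general Lusztig induction, where the Mackey formula has extra terms, so your argument only extends where such a clean scalar product formula holds, e.g.\ Harish-Chandra induction of cuspidal characters.
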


We present an example illustrating Theorem~\ref{thm:main}.

\begin{example*}
The linear group $G=\mathrm{GL}_2(q)$, where $q$ is a prime power,
has a conjugacy classes of split maximal tori isomorphic to
$\mathbb{F}_q^\times \times \mathbb{F}_q^\times$, with a
representative $T=\{\mathrm{diag}(x,y): x,y\in
\mathbb{F}_q^\times\}$. Each $\theta=\lambda_1\boxtimes\lambda_2\in
\Irr(T)$ with $\lambda_1\neq \lambda_2\in \Irr(\mathbb{F}_q^\times)$
has trivial stabilizer in the Weyl group $W(G,T)\cong C_2$, and
consequently the associated Deligne--Lusztig character
$\chi:=R_{T}^G(\theta)$ is an irreducible character of $G$ (of
degree $q+1$). Fix a generator $x$ of $\mathbb{F}_q^\times$ and let
$\zeta:=\zeta_{q-1}$ be a primitive $(q-1)$-th root of unity.
Suppose that $\lambda_i(x)=\zeta^{m_i}$ for $i=1,2$, with $m_i\in
\mathbb{Z}_{\ge 0}$. Then the set of values of $\theta$ is
$\mathcal{V}_\theta=\{\zeta^{m_1 a+m_2 b}: a,b\in \mathbb{Z}\}$. On
the other hand, the set of values of $\chi$ is
$\mathcal{V}_\chi=\{0,\ \zeta^{(m_1+m_2)a},\
(q+1)\zeta^{(m_1+m_2)a},\ \zeta^{m_1 b+m_2 c}+\zeta^{m_1 c+m_2 b} :
a,b,c\in \mathbb{Z}\}$ (see \cite[Table~2.5]{GM20}). In this case,
Theorem~\ref{thm:main} claims that if a prime $p$ does not divide
$q+1$, then the $p$-parts of the conductors of the two sets
$\mathcal{V}_\theta$ and $\mathcal{V}_\chi$ (of cyclotomic integers)
are the same, except possibly in the case where one is $1$ and the
other is $p$.
\end{example*}

One immediate consequence of Theorem~\ref{thm:main} is that the
Deligne--Lusztig character under consideration contains an irreducible
constituent whose $p$-rationality level is at least that of $\theta$. This
allows us to prove the existence of irreducible characters with high
$p$-irrationality in the relevant \emph{Lusztig series}.

Let $\bG^\ast$ be a connected reductive algebraic group and
$F^\ast:\bG^\ast\to\bG^\ast$ be a Steinberg map such that $(\bG,F)$ and
$(\bG^\ast,F^\ast)$ are in duality. For a semisimple element
$s\in {\bG^\ast}^{F^\ast}$, the Lusztig series $\cE(\bG^F,s)$ associated
with~$s$ is the set of irreducible constituents of the Deligne--Lusztig
characters $R_{\bT}^{\bG}(\theta)$ corresponding to pairs $(\bT,\theta)$ in
the geometric conjugacy class determined by $s$; see
Section~\ref{sec:main-theorem} for details.

\begin{corollarya}   \label{cor:main2}
 Let $s \in {\bG^\ast}^{F^\ast}$ be a semisimple element with
 $\nu_p(\ord(s)) \ge 2$. Suppose that $p$ does not divide
 $|{\bG^\ast}^{F^\ast} : {\bT^\ast}^{F^\ast}|_{r'}$ for some $F^\ast$-stable
 maximal torus $\bT^\ast \subseteq \bG^\ast$ containing $s$. Then the Lusztig
 series $\cE(\bG^F,s)$ contains a character whose $p$-rationality level is at
 least $\nu_p(\ord(s))$.
\end{corollarya}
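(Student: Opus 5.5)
The plan is to deduce the corollary from Theorem~\ref{thm:main} by choosing a suitable pair $(\bT,\theta)$ in duality with $(\bT^\ast,s)$. Via the duality between $(\bG,F)$ and $(\bG^\ast,F^\ast)$, the $F^\ast$-stable maximal torus $\bT^\ast\ni s$ corresponds to an $F$-stable maximal torus $\bT\le\bG$ with $\bT^F\cong{\bT^\ast}^{F^\ast}$. We also have the standard isomorphism ${\bT^\ast}^{F^\ast}\cong\Irr(\bT^F)$. Let $\theta\in\Irr(\bT^F)$ correspond to $s$, so that $(\bT,\theta)$ lies in the geometric conjugacy class determined by $s$. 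Every irreducible constituent of $\chi:=R_\bT^\bG(\theta)$ then lies in $\cE(\bG^F,s)$.

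The first step computes $\lev(\theta)$. Since $\theta$ is a linear character, $\QQ(\theta)=\QQ(\ze_m)$, where $m$ is the order of $\theta$ in the group $\Irr(\bT^F)$. The isomorphism $\Irr(\bT^F)\cong{\bT^\ast}^{F^\ast}$ is a group isomorphism, so $m=\ord(s)$. Hence $c(\theta)$ equals $\ord(s)$ up to the usual factor $2$ ambiguity, and this ambiguity is harmless: for $p$ odd it does not affect $\nu_p$, and for $p=2$ we have $\nu_2(\ord(s))\ge2$, so $4\mid m$ and the conductor is exactly $m$. Therefore $\lev(\theta)=\nu_p(\ord(s))\ge2$.

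The second step checks that $\chi$ has $p'$-degree. We use the degree formula $R_\bT^\bG(\theta)(1)=\pm|\bG^F:\bT^F|_{r'}$. By duality $|\bG^F|=|{\bG^\ast}^{F^\ast}|$ and $|\bT^F|=|{\bT^\ast}^{F^\ast}|$, so $\chi(1)=\pm|{\bG^\ast}^{F^\ast}:{\bT^\ast}^{F^\ast}|_{r'}$, which is prime to $p$ by hypothesis. Theorem~\ref{thm:main} now applies. Since $\max\{\lev(\theta),\lev(\chi)\}\ge\lev(\theta)\ge2$, it gives $\lev(\chi)=\lev(\theta)=\nu_p(\ord(s))$.

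The last step passes from the virtual character $\chi$ to an irreducible constituent. Write $\chi=\sum_{\psi\in\cE(\bG^F,s)}a_\psi\psi$ with $a_\psi\in\ZZ$. Then $\QQ(\chi)\subseteq\QQ(\psi:a_\psi\ne0)$, so $c(\chi)$ divides $\lcm\{c(\psi):a_\psi\neq0\}$. Taking $p$-parts, $\nu_p(c(\chi))\le\max_\psi\nu_p(c(\psi))$, and so some constituent $\psi$ has $\lev(\psi)\ge\lev(\chi)=\nu_p(\ord(s))$. The main point needing care is the duality bookkeeping: matching the order of $\theta$ with $\ord(s)$ and the index $|\bG^F:\bT^F|_{r'}$ with the dual index. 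Both are standard consequences of the construction of the duality, for example \cite[Chapter~11]{Digne-Michel}.
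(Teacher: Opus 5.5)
Your proposal is correct and follows the paper's proof essentially step for step. Like the paper, you take the pair $(\bT,\theta)$ dual to $(\bT^\ast,s)$ and use that duality preserves orders and that $|\bT^F|=|{\bT^\ast}^{F^\ast}|$; this gives $\lev(\theta)=\nu_p(\ord(s))\ge 2$ and $p\nmid\chi(1)$, after which you apply Theorem~\ref{thm:main} and extract a constituent of $R_{\bT}^{\bG}(\theta)$ of large level. Two points the paper leaves implicit are handled explicitly in your version: the factor-$2$ ambiguity in $c(\theta)$, and the divisibility $c(\chi)\mid\lcm\{c(\psi)\}$.
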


Can Theorem~\ref{thm:main} be extended to more general Lusztig induced
characters $R_{\bL \le \bP}^{\bG}(\psi)$ where $\bL$ is an $F$-stable Levi
subgroup of a parabolic subgroup $\bP$ of $\bG$? Some further cases we prove
in Section~\ref{sec:ConjC} have convinced us to put forward the following.

\begin{conjecturea}   \label{conj:Lusztig-induction}
 Let $\bG,F$, $r$, and $p$ be as above. Let $\bL$ an $F$-stable Levi subgroup
 of a parabolic subgroup $\bP$ of $\bG$, and $\psi \in\Irr(\bL^F)$. Suppose
 that the Lusztig induced character
 $R_{\bL \le\bP}^{\bG}(\psi)\in\ZZ\Irr(\bG^F)$ has $p'$-degree. Then
 \[ \lev\left(R_{\bL \le \bP}^{\bG}(\psi)\right) = \lev(\psi)\]
 whenever $\max\{\lev(\psi),\lev(R_{\bL \le \bP}^{\bG}(\psi))\}\ge2$.
 Otherwise, both $\psi$ and $R_{\bL \le \bP}^{\bG}(\psi)$ are almost
 $p$-rational.
\end{conjecturea}

We note that Conjecture~\ref{conj:Lusztig-induction} is new even in the case
where the parabolic subgroup $\bP$ is $F$-stable. In this situation, Lusztig
induction reduces to Harish-Chandra induction, and $R_{\bL\le \bP}^\bG(\psi)$
is obtained from $\psi$ by inflating it to $\bP^F$ and then inducing to
$\bG^F$: $R_{\bL \le\bP}^\bG(\psi)=
\Ind^{\bG^F}_{\bP^F}(\Infl_{\bL^F}^{\bP^F}(\psi))$. Let
$\Psi:=\Infl_{\bL^F}^{\bP^F}(\psi)$. The
$F$-stable-parabolic case of the conjecture becomes
$\lev\bigl(\Ind^{\bG^F}_{\bP^F}(\Psi)\bigr)=\lev(\Psi)$. This
equality naturally raises a question: does ordinary induction
preserve $p$-rationality when the induced character has $p'$-degree? 
Unfortunately, we do not know the answer at this time.

The paper is organized as follows. In Section~\ref{sec:invariant},
we prove the necessary preparatory lemmas about the so-called
$\ell$-invariant. They are then used in
Section~\ref{sec:main-theorem} to establish Theorem~\ref{thm:main}
and Corollary~\ref{cor:main2}. In the final Section~\ref{sec:ConjC},
we verify Conjecture~\ref{conj:Lusztig-induction} in the case where
$p$ is the defining characteristic of $\bG$, as well as when $\bL^F$
is $p$-solvable and $\bP$ is $F$-stable. We also take the
opportunity to prove Navarro--Tiep's conjecture
\cite[Conjecture~C]{Navarro-Tiep21}, mentioned above, for finite
reductive groups in the same characteristic $p$.

\section{The $\ell$-invariant}\label{sec:invariant}

We study the relationship between the $p$-rationality of a character of a Levi
subgroup and that of its Lusztig induced character via an invariant of
characters, which we call the \emph{$\ell$-invariant}. This invariant first
appeared in the work of Isaacs and Navarro \cite{IN} on the $p$-rationality of
Sylow restrictions in $p$-solvable groups.

We adopt notation standard in the representation theory of finite groups.
In particular, for finite groups $H \le G$, we use $\Ind_H^G(\psi)$ for the
character of $G$ induced from a character $\psi$ of $H$, and $\Res_H^G(\chi)$
for the restriction to $H$ of a character $\chi$ of $G$.

Throughout, we fix a prime $p$. The notation $\lev(\cdot)$ always refer to the
$p$-rationality level of the relevant character. For an abelian extension $\KK$
of $\QQ$, the \emph{conductor} of $\KK$, denoted $c(\KK)$, is the smallest
positive integer $n$ such that $\KK\le\QQ(\ze_n)$. For a virtual character
$\Xi$ of a finite group $G$, recall that
\[c(\Xi):=c(\QQ(\Xi))\quad\text{and}\quad\lev(\Xi):=\nu_p\bigl(c(\Xi)\bigr),\]
where $\nu_p(k):=\log_p(k_p)$ (for $k\in \ZZ_{+}$) denotes the $p$-adic valuation.

We define
\[
\ell(\Xi):=\max\left\{i\in\ZZ_{\geq 0}\mid \Delta_i(\Xi)(1) \notequiv 0
\bmod p\right\},
\]
where, for every $i\in\ZZ_{\ge0}$, the character $\Delta_i(\Xi)$ is
the sum of the irreducible constituents (counting multiplicities) of
$\Xi$ of $p$-rationality level $i$. That is,
\[ \Delta_i(\Xi):=\sum_{\chi\in\Irr(G); \lev(\chi)=i} [\chi,\Xi]\, \chi. \]
If $\Delta_i(\Xi)(1)\equiv 0 \pmod p$ for all $i\in\ZZ_{\ge 0}$, we
adopt the convention that $\ell(\Xi)=0$.

We begin with a variation of \cite[Lemma~4.2]{HS26} on the
relationship between the $\ell$-invariant and the $p$-rationality
level.

\begin{lemma}   \label{lem:2}
 Let $\Xi$ be a virtual character of a finite group $G$. Suppose that
 $\ell(\Xi)\geq 2$. Then $\lev(\Xi)\ge \ell(\Xi)$.
\end{lemma}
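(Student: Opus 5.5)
We argue by contradiction. Put $\ell:=\ell(\Xi)\ge 2$ and suppose $\lev(\Xi)<\ell$. Let $n:=c(\Xi)$ and write $n=p^a m$ with $p\nmid m$ and $a=\lev(\Xi)\le \ell-1$. The plan is to use a Galois automorphism that fixes $\Xi$ but acts nontrivially on the constituents of level exactly $\ell$, and then count degrees modulo $p$.

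Let $N:=|G|$, so that every character value of $G$ lies in $\QQ(\ze_N)$. Write $N=p^b N'$ with $p\nmid N'$. Note that $b\ge \ell$, since some irreducible character of $G$ has level $\ell$. Let $\cG:=\Gal(\QQ(\ze_N)/\QQ)$, and let $\sigma\in\cG$ be the automorphism that fixes $\ze_{N'}$ and sends $\ze_{p^b}\mapsto \ze_{p^b}^{1+p^{\ell-1}}$. Because $\ell\ge 2$, the integer $1+p^{\ell-1}$ is a unit modulo $p^b$.

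First, $\sigma$ fixes $\QQ(\ze_{p^{\ell-1}}\ze_{N'})\supseteq \QQ(\ze_n)\cap\QQ(\ze_N)$, and therefore $\sigma$ fixes $\Xi$. It follows that $\sigma$ permutes the irreducible constituents of $\Xi$ while preserving their multiplicities. Since $\sigma$ also preserves levels (as $\lev(\chi^\sigma)=\lev(\chi)$), it fixes each $\Delta_i(\Xi)$. Now consider the group $\langle\sigma\rangle$, which has $p$-power order: $\sigma^{p}$ sends $\ze_{p^b}\mapsto\ze_{p^b}^{(1+p^{\ell-1})^p}$, and $(1+p^{\ell-1})^{p^k}\equiv 1 \pmod{p^b}$ for large $k$. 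Take $\chi$ with $\lev(\chi)=\ell$. Then $\QQ(\chi)$ is not contained in $\QQ(\ze_{p^{\ell-1}N'})$. The key claim is that $\sigma$ does not fix $\chi$. Indeed, the fixed field of $\sigma$ in $\QQ(\ze_N)$ is exactly $\QQ(\ze_{p^{\ell-1}}\ze_{N'})$. To see this, observe that the subgroup $\langle 1+p^{\ell-1}\rangle$ of $(\ZZ/p^b)^\times$ equals the kernel of reduction mod $p^{\ell-1}$. This is the step that requires $\ell\ge2$ (and, for $p=2$, requires checking that $1+2^{\ell-1}$ generates the kernel of reduction modulo $2^{\ell-1}$; this holds when $\ell-1\ge2$). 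For the case $p=2$, $\ell=2$, the element $3$ generates only part of the kernel $(\ZZ/2^b)^\times$, so there we would instead work with the full kernel group $K$ of reduction mod $2$ acting on $\QQ(\ze_{2^b})$. This group is a $2$-group whose fixed field is $\QQ$. This is the main technical point to check carefully.

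So in general we replace $\sigma$ by the $p$-group $K:=\Gal(\QQ(\ze_N)/\QQ(\ze_{p^{\ell-1}N'}))$, which is isomorphic to the kernel of $(\ZZ/p^b)^\times\to(\ZZ/p^{\ell-1})^\times$ and is a $p$-group. By the first step, $K$ fixes $\Xi$ (since $a\le\ell-1$), so it permutes the constituents of $\Delta_\ell(\Xi)$ with multiplicities preserved. No constituent of level $\ell$ is fixed by $K$, because its field of values is not inside the fixed field $\QQ(\ze_{p^{\ell-1}N'})$. Hence every $K$-orbit on these constituents has size a positive power of $p$. All members of an orbit have the same degree and the same multiplicity $[\chi,\Xi]$. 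Therefore $\Delta_\ell(\Xi)(1)=\sum_{\text{orbits}}|\mathcal O|\,[\chi,\Xi]\chi(1)\equiv 0\pmod p$. This contradicts the definition of $\ell(\Xi)$, because $\ell\ge 2$ is the maximal index with $\Delta_\ell(\Xi)(1)\not\equiv0$ (this maximum is not the convention value $0$). Hence $\lev(\Xi)\ge\ell(\Xi)$.
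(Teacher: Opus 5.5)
Your argument is correct and is essentially the paper's proof. Your group $K=\Gal(\QQ(\ze_N)/\QQ(\ze_{p^{\ell-1}N'}))$ is exactly the $p$-group $\cG$ used there: assuming $\lev(\Xi)\le \ell-1$, it fixes $\Xi$ and moves every level-$\ell$ constituent, which forces $p\mid\Delta_\ell(\Xi)(1)$. The preliminary detour through the single automorphism $\sigma$, including its failure for $p=2$, $\ell=2$, is unnecessary, since working with $K$ from the start handles all primes uniformly.
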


\begin{proof}
Let $\al := \ell(\Xi) \ge 2$. Write $|G|=p^{a}m$, where $a\in\ZZ_{\ge0}$ and
$m\in\ZZ_{\ge 1}$ is coprime to $p$. Note that the $p$-rationality level of
every irreducible character of $G$ is at most $a$, so $\Delta_i(\Xi)= 0$ for
every $i\ge a$. Therefore $a\ge \al$. Consider the Galois group
$\cG:=\Gal\left(\QQ(\ze_{p^am}) /\QQ(\ze_{p^{\al-1}m})\right)$.
Restriction to $\QQ\left(\ze_{p^a}\right)$ induces an injective homomorphism
from $\cG$ to the group $\Gal\left(\QQ(\ze_{p^a})/\QQ(\ze_{p^{\al-1}})\right)$
of order $p^{a-\al+1}$, and hence $\cG$ is a non-trivial $p$-group.

Assume, for a contradiction, that $\lev(\Xi) \le \al - 1$.
Equivalently, $\Xi$ is fixed by $\cG$. Recall that every irreducible
constituent $\chi$ of $\Delta_{\al}(\Xi)$ has level $\al$, thus
\[ \QQ(\chi) \nsubseteq \QQ\left(\ze_{p^{\al-1}m}\right), \]
so $\chi$ is not invariant under $\cG$. It follows that every $\cG$-orbit of
irreducible constituents of $\Delta_{\al}(\Xi)$ has size a non-trivial power
of $p$. Consequently, $\Delta_{\al}(\Xi)(1)$ is divisible by $p$,
contradicting the definition of $\al$.
\end{proof}

We also require the following result, which improves upon
\cite[Lemma~4.3]{HS26} by removing the hypothesis on $\psi$ being irreducible
of $p'$-degree.

\begin{lemma}   \label{lem:3}
 Let $P\leq M\leq G$ where $P\in \Syl_p(G)$, $\psi$ a virtual character of $M$,
 and $\chi:=\Ind_M^G(\psi)$. Then
 \[\ell\left(\Res^G_P(\chi)\right)=\ell\left(\Res^M_P(\psi)\right)\]
 or else
 $\left\{\ell\left(\Res^G_P(\chi)\right),\ell\left(\Res^M_P(\psi)\right)\right\}\subseteq\{0,1\}$.
\end{lemma}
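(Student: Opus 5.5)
The plan is to compare $\Delta_i(\Res^G_P(\chi))(1)$ and $\Delta_i(\Res^M_P(\psi))(1)$ modulo $p$ for every $i\ge 2$, via Mackey's formula. Specifically, I aim to show
\[
\Delta_i\bigl(\Res^G_P(\chi)\bigr)(1)\equiv k\,\Delta_i\bigl(\Res^M_P(\psi)\bigr)(1)\pmod p\qquad (i\ge2),
\]
where $k:=|\bN_G(P):\bN_M(P)|$. This index is prime to $p$, because $P\le \bN_M(P)$ is a Sylow $p$-subgroup of $\bN_G(P)$. Given this congruence, the statement follows at once. If one of the two $\ell$-values is at least $2$, then for all $i\ge 2$ the two quantities $\Delta_i(\cdot)(1)$ are simultaneously zero or nonzero modulo $p$. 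Hence the maximal such $i$ agrees, and both $\ell$-values equal it. Otherwise both lie in $\{0,1\}$.

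\textbf{Step 1 (reduce to linear characters).} Since $P$ is a $p$-group, every nonlinear irreducible character of $P$ has degree divisible by $p$. So for any virtual character $\Xi$ of $P$,
\[
\Delta_i(\Xi)(1)\equiv \sum_{\lambda}[\lambda,\Xi]\pmod p,
\]
where $\lambda$ runs over linear characters of $P$ with $\lev(\lambda)=i$. For $i\ge2$ these are exactly the linear $\lambda$ of order $p^i$. Moreover, $\Delta_i$ is additive in $\Xi$.

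\textbf{Step 2 (Mackey).} Mackey's formula gives
\[
\Res^G_P(\chi)=\sum_{x\in P\backslash G/M}\Ind^P_{P\cap {}^xM}\bigl(\Res^{{}^xM}_{P\cap {}^xM}({}^x\psi)\bigr).
\]
I treat the terms with $Q:=P\cap{}^xM<P$ proper; I expect this to be the main point. Write $\eta$ for the restricted character on $Q$. For a linear $\lambda$ of $P$, Frobenius reciprocity gives $[\lambda,\Ind_Q^P\eta]=[\Res^P_Q\lambda,\eta]$. Since $P'\le\Phi(P)$ and $Q<P$, we have $QP'<P$. Hence there is a linear character $\mu$ of $P$ of order $p$ that is trivial on $Q$.

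Multiplication by $\langle\mu\rangle$ has the following properties:
\begin{itemize}
\item it preserves order $p^i$ for $i\ge 2$, hence preserves level $i$;
\item it acts freely, with orbits of size $p$;
\item it preserves $\Res^P_Q\lambda$.
\end{itemize}
Therefore the sum over level-$i$ linear $\lambda$ of $[\lambda,\Ind_Q^P\eta]$ is divisible by $p$. So such terms contribute $0$ modulo $p$ to $\Delta_i(\cdot)(1)$ for $i\ge2$. (The restriction $i\ge2$ is exactly what fails for $i\le 1$, which explains the exceptional case in the statement.)

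\textbf{Step 3 (the remaining terms).} The remaining double cosets are those with $P\le{}^xM$. Then ${}^{x^{-1}}P$ is a Sylow $p$-subgroup of $M$, so ${}^{x^{-1}}P={}^{m^{-1}}P$ for some $m\in M$. This gives $x\in \bN_G(P)M$. The double cosets $PxM$ contained in $\bN_G(P)M$ correspond bijectively to $\bN_G(P)/\bN_M(P)$, using $P\trianglelefteq \bN_G(P)$; so there are $k$ of them. Writing $x=nm$ with $n\in\bN_G(P)$, the corresponding term is $\Res^{{}^xM}_P({}^x\psi)={}^n\bigl(\Res^M_P(\psi)\bigr)$. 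Conjugation by $n$ permutes $\Irr(P)$ preserving degrees and fields of values, hence levels. So
\[
\Delta_i\bigl({}^n\Res^M_P(\psi)\bigr)(1)=\Delta_i\bigl(\Res^M_P(\psi)\bigr)(1).
\]
Summing over the $k$ terms and combining with Step 2 yields the displayed congruence, which finishes the plan.
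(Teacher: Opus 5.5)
Your proof is correct and follows the same route as the paper: decompose $\Res^G_P\Ind_M^G\psi$ by Mackey's formula, show that the double cosets with $P\cap{}^xM<P$ contribute $0$ modulo $p$ to $\Delta_i(\cdot)(1)$ for $i\ge 2$, and observe that each remaining coset contributes $\Delta_i(\Res^M_P\psi)(1)$, their number (your $|\bN_G(P):\bN_M(P)|$, the paper's $|T|$) being prime to $p$. The only difference is that the paper cites \cite[Lemma~4.3]{HS26} for the congruence and \cite[Lemma~3.4]{IN} for $p\nmid|T|$, whereas your Steps~1--3 prove both directly, and correctly.
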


\begin{proof}
Fix a set of representatives for the double cosets $M\backslash G/P$, and let
$T$ denote the subset consisting of those representatives $x$ for which
$P \subseteq M^x$. The proof of \cite[Lemma~4.3]{HS26} shows that
\[\begin{aligned}
 \Delta_i\!\left(\Res^G_P\!\left(\Ind_M^G(\varphi)\right)\right)(1)
  &\equiv \sum_{t\in T} \Delta_i\left(\Res_P^M(\varphi^t)\right)(1) \pmod p\\
  &\equiv |T|\cdot \Delta_i\!\left(\Res^M_P(\varphi)\right)(1) \pmod p
\end{aligned}\]
for every $\varphi\in\Irr(M)$ and every $i\ge 2$. We deduce that
\[\Delta_i\!\left(\Res^G_P(\chi)\right)(1) \equiv |T|\cdot
\Delta_i\!\left(\Res^M_P(\psi)\right)(1) \pmod p\]
for every $i\ge 2$. Since $p$ does not divide $|T|$ by \cite[Lemma~3.4]{IN},
it follows that $\Delta_i\left(\Res^G_P(\chi)\right)(1)$ is divisible by $p$
if and only if $\Delta_i\left(\Res^M_P(\psi)\right)(1)$ is divisible by~$p$.
This proves the lemma.
\end{proof}

\begin{lemma}   \label{lem:1}
 Let $p$ be a prime, $M \le G$ with $p\nmid |G:M|$, and $P\in\Syl_p(M)$. Let
 $\psi$ be a virtual character of $M$ with $\lev(\psi)\ge 2$, and set
 $\chi:=\Ind_M^G(\psi)$. Then
 \[\lev(\psi)=\lev(\chi)=\lev\left(\Res^G_P(\chi)\right),\]
 provided that one of the following conditions holds:
 \begin{enumerate}[\rm(1)]
  \item $\lev(\psi)=\ell\left(\Res_P^G(\psi)\right)$; or
  \item $\psi$ is linear.
 \end{enumerate}
\end{lemma}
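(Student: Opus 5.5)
The plan is to squeeze $\lev(\chi)$ between two quantities that are both equal to $\lev(\psi)$. First I record the easy upper bounds. Since $p\nmid|G:M|$, the group $P$ is a Sylow $p$-subgroup of $G$ as well. The values of $\chi=\Ind_M^G(\psi)$ are rational linear combinations of values of $\psi$, so $\QQ(\chi)\subseteq\QQ(\psi)$. The values of $\Res^G_P(\chi)$ form a subset of the values of $\chi$. Because conductors are monotone under inclusion of abelian fields, this gives
\[\lev\bigl(\Res^G_P(\chi)\bigr)\le\lev(\chi)\le\lev(\psi).\]
It therefore suffices to prove $\lev\bigl(\Res^G_P(\chi)\bigr)\ge\lev(\psi)$. (I read the hypothesis in (1) as $\ell(\Res^M_P(\psi))$, since $\psi$ lives on $M$.)

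Under condition (1), I apply Lemma~\ref{lem:3} to $P\le M\le G$. The alternative $\{\ell(\Res^G_P(\chi)),\ell(\Res^M_P(\psi))\}\subseteq\{0,1\}$ is excluded, because $\ell(\Res^M_P(\psi))=\lev(\psi)\ge2$. Hence $\ell(\Res^G_P(\chi))=\lev(\psi)\ge 2$. Lemma~\ref{lem:2}, applied to the virtual character $\Res^G_P(\chi)$ of $P$, then yields $\lev(\Res^G_P(\chi))\ge\ell(\Res^G_P(\chi))=\lev(\psi)$, which closes the chain.

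Under condition (2), I reduce to (1) by showing $\ell(\Res^M_P(\psi))=\lev(\psi)$. Set $\lambda:=\Res^M_P(\psi)$. This is a linear character of $P$, so $\Delta_{\lev(\lambda)}(\lambda)=\lambda$ has degree $1$, and every other $\Delta_i(\lambda)$ vanishes. Thus $\ell(\lambda)=\lev(\lambda)$, and it remains to see that $\lev(\lambda)=\lev(\psi)$.

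This last step is the only real point. Since $\psi$ is linear, $\QQ(\psi)=\QQ(\ze_n)$ with $n=o(\psi)$, so $\lev(\psi)=\nu_p(n)$ whenever this is at least $2$. The caveat $n\equiv2\pmod 4$ only affects level $\le1$ and does not matter here. Similarly $\lev(\lambda)=\nu_p(o(\lambda))$. Now $\psi$ factors through the abelian group $M/M'$, and $PM'/M'$ is the Sylow $p$-subgroup of $M/M'$. The $p$-part of $o(\psi)$ is the order of the $p$-part of $\psi$, viewed as a character of $M/M'$, and that is detected on the Sylow $p$-subgroup. Hence $o(\psi)_p=o(\lambda)$, and so $\lev(\lambda)=\lev(\psi)\ge2$. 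Condition (1) then holds, and the previous paragraph finishes the proof.
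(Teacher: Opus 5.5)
Your proof is correct and follows the paper's argument: you sandwich $\lev(\Res^G_P(\chi))\le\lev(\chi)\le\lev(\psi)$ and close it under (1) with Lemma~\ref{lem:3} and Lemma~\ref{lem:2}, then show (2) implies (1) via $\ell(\Res^M_P(\psi))=\lev(\Res^M_P(\psi))=\lev(\psi)$ for linear $\psi$. Your $M/M'$ argument for $o(\psi)_p=o(\Res^M_P(\psi))$ is a dual phrasing of the paper's step of choosing $x\in M$ with $c(\psi(x))_p=p^{\lev(\psi)}$ and passing to $x_p$, and your reading of the hypothesis in (1) as $\ell(\Res^M_P(\psi))$ is the intended one.
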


\begin{proof}
Suppose that (1) holds, so $\lev(\psi)=\ell\left(\Res_P^G(\psi)\right)$. By the
assumption on $\lev(\psi)$, we have $\ell\left(\Res_P^G(\psi)\right)\ge 2$.
By Lemma~\ref{lem:3}, it follows that
\[ \ell\left(\Res_P^G(\psi)\right)=\ell\left(\Res_P^G(\chi)\right). \]
On the other hand, Lemma~\ref{lem:2} implies that
\[ \lev\left(\Res_P^G(\chi)\right)\ge \ell\left(\Res_P^G(\chi)\right). \]
Combining these (in)equalities, we obtain
\[ \lev\left(\Res_P^G(\chi)\right)\ge \lev(\psi). \]
However, by the character formula we have
$\QQ\left(\Res_P^G(\chi)\right) \subseteq \QQ(\chi) \subseteq \QQ(\psi)$,
which implies that
\[ \lev(\psi)\ge \lev(\chi)\ge \lev\left(\Res_P^G(\chi)\right). \]
Therefore, all these inequalities must be equalities, and the claim is proved
assuming~(1) holds.

If $\psi\in\Irr(M)$ is linear, then the conductor $c(\psi)$ of $\psi$ is
precisely the order of $\psi$ in the abelian group of linear characters of $M$,
and moreover $\QQ(\psi)=\QQ\left(\ze_{c(\psi)}\right)$. Let
$\al:=\lev(\psi)$, so that $c(\psi)=p^\al m$ for some $m\in\ZZ_{\ge1}$ not
divisible by $p$. Then there exists an element $x\in M$ such that
$c(\psi(x))_p=p^\al$. Writing $x_p$ for the $p$-part of $x$, it follows that
$c(\psi(x_p))=p^\al$. Consequently, $\lev(\psi)=\lev(\psi_P)$. Since it is
clear that $\ell\left(\Res_P^G(\psi)\right)=\lev\left(\Res_P^G(\psi)\right)$,
we conclude that $\lev(\psi)=\ell\left(\Res_P^G(\psi)\right)$, and
hence assumption~(2) implies~(1), so we conclude by the first part.
\end{proof}

\section{Theorem \ref{thm:main} and its consequences}\label{sec:main-theorem}

Our notation on representation theory of finite groups of Lie type
follows \cite{Carter85,GM20}.

Recall from the previous section that $p$ is a fixed prime and that
$\lev(\cdot)$ denotes the $p$-rationality level. In this and the
following sections, we fix the following notation.

\begin{notation}   \label{notation}
\begin{enumerate}[\rm(i)]
 \item $r$ is a prime.
 \item $\bG$ is a connected reductive linear algebraic group defined over an
  algebraic closure of a finite field of characteristic $r$ and $F:\bG\to\bG$
  is an associated Steinberg endomorphism.
 \item $\bT\le\bG$ is an $F$-stable maximal torus and $\theta\in\Irr(\bT^F)$.
 \item $\bL$ is an $F$-stable Levi subgroup of a parabolic subgroup $\bP$ of
  $\bG$, and $\psi\in\Irr(\bL^F)$.
\end{enumerate}
\end{notation}

\subsection{Proof of Theorem~\ref{thm:main}}
Deligne--Lusztig characters $R_{\bT}^{\bG}(\theta)$, roughly speaking, are
virtual characters constructed geometrically via the $l$-adic cohomology of
certain algebraic varieties (called Deligne--Lusztig varieties). We refer the
reader to \cite[\S2.2]{GM20} for the precise definition. The degree of
$R_{\bT}^{\bG}(\theta)$ is
\[ R_{\bT}^{\bG}(\theta)(1)=|\bG^F:\bT^F|_{r'} \]
and general values are given by
\begin{equation}
R_{\bT}^{\bG}(\theta)(g)=\frac{1}{|\bC^\circ_\bG(s)^F|}
\sum_{x\in\bG^F:\, x^{-1}sx\in\bT^F} Q_{x\bT
x^{-1}}^{\bC^\circ_\bG(s)}(u)\theta(x^{-1}sx),
\label{eq:DL-character}
\end{equation}
where $g=su=us\in\bG^F$ with $s$ being semisimple and $u$ unipotent is the
Jordan decomposition of $g$. (See \cite[Theorem~2.2.16]{GM20}.)
Here, $\bC^\circ_\bG(s)$ denotes the connected component of the centralizer
$\bC_\bG(s)$ containing the identity element. Also,
$Q_{x\bT x^{-1}}^{\bC^\circ_\bG(s)}$ is the Green function on the set of
unipotent elements $u$ of $\bC^\circ_\bG(s)^F$, defined as
\[Q_{x\bT x^{-1}}^{\bC^\circ_\bG(s)}(u):=
  R_{x\bT x^{-1}}^{\bC^\circ_\bG(s)}(\mathbf{1}_{x\bT x^{-1}})(u)
\]
for every $x\in\bG^F$ such that $x^{-1}sx\in\bT^F$. In particular,
\[Q_{x\bT x^{-1}}^{\bC^\circ_\bG(s)}(1)
  = R_{x\bT x^{-1}}^{\bC^\circ_\bG(s)}(\mathbf{1}_{x\bT x^{-1}})(1)
  =\pm|\bC^\circ_\bG(s)^F:x\bT^Fx^{-1}|_{r'}.
\]
As $|x\bT^F x^{-1}|_{r'}=|\bT^F|_{r'}=|\bT^F|$ for all relevant $x$,
we then have
\[Q_{x\bT x^{-1}}^{\bC^\circ_\bG(s)}(1)=\pm|\bC^\circ_\bG(s)^F|_{r'}/|\bT^F|.\]

We now prove Theorem~\ref{thm:main}, which is restated.

\begin{theorem}   \label{thm:main-repeated}
 Assume Notation~\ref{notation}. Let
 \[\chi:=R_{\bT}^{\bG}(\theta)\in\ZZ\Irr(\bG^F)\]
 be the Deligne--Lusztig character of $\bG^F$ associated to $\bT$ and $\theta$.
 Suppose that $\chi$ has $p'$-degree. Then
 \[ \lev(\chi) = \lev(\theta) \]
 whenever $\max\{\lev(\theta),\lev(\chi)\}\ge 2$. Otherwise, both $\chi$ and
 $\theta$ are almost $p$-rational.
\end{theorem}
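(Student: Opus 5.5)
The plan is to reduce to Lemma~\ref{lem:1} applied to $M=\bT^F$ and $\psi=\theta$, which is linear. To do this I will compare $\chi$ with the ordinary induced character $\Ind_{\bT^F}^{\bG^F}(\theta)$ on $p$-elements. There are two inequalities to establish. The first is an upper bound $\lev(\chi)\le\lev(\theta)$ coming directly from the character formula \eqref{eq:DL-character}. The second is a lower bound $\lev(\chi)\ge\lev(\Res_P^{\bG^F}(\chi))=\lev(\theta)$ when $\lev(\theta)\ge2$.

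The upper bound comes first. Green functions $Q^{\bC^\circ_\bG(s)}_{x\bT x^{-1}}$ take rational integer values, so \eqref{eq:DL-character} shows that every value of $\chi$ lies in $\QQ(\theta)$. Hence $\lev(\chi)\le\lev(\theta)$. In particular, if $\lev(\theta)\le1$ then both characters are almost $p$-rational. So we may assume $\lev(\theta)\ge2$, and it remains to prove $\lev(\chi)\ge\lev(\theta)$.

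Next I dispose of the case $p=r$. Then $|\bT^F|$ is prime to $r=p$, so all values of $\theta$ are $p'$-roots of unity. Thus $\lev(\theta)=0$, and we are in the almost-rational case already handled. So assume $p\ne r$. Then $\chi(1)=|\bG^F:\bT^F|_{r'}$ being prime to $p$ means $p\nmid|\bG^F:\bT^F|$. Hence $\bT^F$ contains a Sylow $p$-subgroup $P$ of $\bG^F$, and $P\in\Syl_p(\bT^F)$ as required in Lemma~\ref{lem:1}.

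The key step, which I expect to be the only real computation, is evaluating $\chi$ at an element $s\in P$. Such $s$ is semisimple because $p\neq r$, so $u=1$ in \eqref{eq:DL-character}. Each Green function then contributes $Q^{\bC^\circ_\bG(s)}_{x\bT x^{-1}}(1)=\eps_s|\bC^\circ_\bG(s)^F|_{r'}/|\bT^F|$. Here the sign $\eps_s=\eps_{\bC^\circ_\bG(s)}\eps_{\bT}$ does not depend on $x$, since all the tori $x\bT x^{-1}$ are $\bG^F$-conjugate. This gives
\[
\chi(s)=\eps_s\,|\bC^\circ_\bG(s)^F|_r^{-1}\cdot\frac{1}{|\bT^F|}\sum_{x\in\bG^F:\,x^{-1}sx\in\bT^F}\theta(x^{-1}sx)
=\eps_s\,|\bC^\circ_\bG(s)^F|_r^{-1}\,\Ind_{\bT^F}^{\bG^F}(\theta)(s).
\]
The care needed is to check that the signs and the factor are uniform in $x$. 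Granting this, $\Res_P^{\bG^F}(\chi)$ and $\Res_P^{\bG^F}(\Ind_{\bT^F}^{\bG^F}\theta)$ differ pointwise by nonzero rational scalars. Therefore they generate the same field, and
\[\lev\bigl(\Res_P^{\bG^F}(\chi)\bigr)=\lev\bigl(\Res_P^{\bG^F}(\Ind_{\bT^F}^{\bG^F}\theta)\bigr).\]

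Finally, apply Lemma~\ref{lem:1}(2) with $M=\bT^F$ and $\psi=\theta$, which is linear with $\lev(\theta)\ge2$ and satisfies $p\nmid|\bG^F:M|$. This yields $\lev\bigl(\Res_P^{\bG^F}(\Ind_{\bT^F}^{\bG^F}\theta)\bigr)=\lev(\theta)$. Hence
\[\lev(\theta)=\lev\bigl(\Res_P^{\bG^F}(\chi)\bigr)\le\lev(\chi)\le\lev(\theta),\]
so $\lev(\chi)=\lev(\theta)$. Together with the case $\lev(\theta)\le1$, this gives the full statement of Theorem~\ref{thm:main-repeated}.
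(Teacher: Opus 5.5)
Your proposal is correct and follows the paper's proof essentially step for step. Both arguments use the bound $\lev(\chi)\le\lev(\theta)$ from the integrality of Green functions, and both reduce to $p\ne r$ with a Sylow $p$-subgroup $P$ of $\bG^F$ lying in $\bT^F$. Both then use the identity $\chi(s)=\pm|\bC^\circ_\bG(s)^F|_r^{-1}\Ind_{\bT^F}^{\bG^F}(\theta)(s)$ for semisimple $s$ (which the paper cites from Carter and you derive directly) and apply Lemma~\ref{lem:1}(2) to the linear character $\theta$. The only cosmetic difference is that the paper compares with the field generated by the values at all semisimple elements, while you restrict straight to $P$, which is slightly cleaner.
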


\begin{proof}
Since $\bT^F$ has $r'$-order, every $\theta \in \Irr(\bT^F)$ is
$r$-rational. Note that Green functions take integral values
\cite[Proposition~2.2.5]{GM20}. It follows from the character
formula \eqref{eq:DL-character} that $\QQ(\chi)\subseteq
\QQ(\theta)$. Therefore $\chi$ is $r$-rational, and we are done in
the case $p=r$.

We therefore assume that $p\neq r$. In particular, since
$\chi(1)=|\bG^F:\bT^F|_{r'}$ is coprime to $p$, it follows that
$|\bG^F:\bT^F|$ is also coprime to $p$. Let $P\in \Syl_p(\bT^F)$ (so
$P\in\Syl_p(\bG^F)$). Similar to the case $p=r$, if $\theta$ is
almost $p$-rational, then so is $\chi$, and we are done again. Thus
we may assume that $\lev(\theta)\ge 2$.

Let $s$ be a semisimple element of $\bG^F$. The character formula yields
\[
\chi(s) = \pm \frac{1}{|\bC^\circ_\bG(s)^F|_r}\,\Ind_{\bT^F}^{\bG^F}(\theta)(s).
\]
(See \cite[Proposition~7.5.4]{Carter85}.) Set $\varphi :=
\Ind_{\bT^F}^{\bG^F}(\theta)$. Then
\[ \chi(s) = \pm \frac{1}{|\bC^\circ_\bG(s)^F|_r}\,\varphi(s). \]

If $\cS$ is a set of cyclotomic integers (that is, finite $\ZZ$-linear
combinations of roots of unity), we write $c(\cS)$ for the conductor of the
smallest abelian extension of $\QQ$ containing $\cS$, and set
$\lev(\cS):=\nu_p(c(\cS))$.

Let $S$ denote the set of semisimple elements of $\bG^F$, and set
\[ \KK := \QQ\left(\{\varphi(s)\mid s\in S\}\right). \]
Then
\[ \lev(\chi) \ge \lev\left(\{\chi(s)\mid s\in S\}\right) = \lev(\KK). \]
On the other hand, by Lemma~\ref{lem:1}(ii),
\[ \lev(\theta) = \lev(\varphi) = \lev\left(\Res^{\bG^F}_{P}(\varphi)\right).\]
Since $P\subseteq S$, we have
\[ \QQ\left(\Res^{\bG^F}_{P}(\varphi)\right) \subseteq \KK, \]
and hence
\[ \lev\left(\Res^{\bG^F}_{P}(\varphi)\right) \le \lev(\KK). \]
Thus $\lev(\theta)\le \lev(\chi)$. The reverse inequality follows
immediately from the character formula together with the integrality of the
Green function, and the proof is complete.
\end{proof}

\subsection{Corollary~\ref{cor:main2} and other consequences}
It is well known that there is a natural bijection $\Pi$ between the set of
$\bG^F$-orbits of pairs $(\bT,\theta)$, where $\bT$ is an $F$-stable maximal
torus of $\bG$ and $\theta\in\Irr(\bT^F)$ and the set of
${\bG^\ast}^{F^\ast}$-orbits of pairs $(\bT^\ast,s)$, where $\bT^\ast$ is an
$F^\ast$-stable maximal torus of $\bG^\ast$ and $s\in\bT^{\ast\,F^\ast}$.
Inspecting the proof of \cite[Corollary~2.5.14]{GM20} and using
\cite[Lemma~2.5.7]{GM20}, we observe that this correspondence preserves orders;
that is, if the pairs $(\bT,\theta)$ and $(\bT^\ast,s)$ belong to orbits
correspond to each other under $\Pi$, then
\[ \ord(\theta)=\ord(s). \]

Recall also that the Lusztig series $\cE(\bG^F,s)$ associated to a semisimple
element $s\in{\bG^\ast}^{F^\ast}$ consists of the irreducible characters of
$\bG^F$ that occur as constituents of Deligne--Lusztig characters
$R_{\bT}^{\bG}(\theta)$, where $(\bT,\theta)$ corresponds under $\Pi$ to some
$(\bT^\ast,s)$ with $s\in\bT^\ast$ (see \cite[Definition~2.6.1]{GM20}).

\begin{corollary}   \label{cor:main2-repeated}
 Let $p$ be a prime, and let $s \in {\bG^\ast}^{F^\ast}$ be a semisimple
 element with $\nu_p(\ord(s)) \ge 2$. Suppose that $p$ does not divide
 $|{\bG^\ast}^{F^\ast} : {\bT^\ast}^{F^\ast}|$ for some $F^\ast$-stable
 maximal torus $\bT^\ast \subseteq \bG^\ast$ containing $s$. Then the Lusztig
 series $\cE(\bG^F,s)$ contains a character whose $p$-rationality level is at
 least $\nu_p(\ord(s))$.
\end{corollary}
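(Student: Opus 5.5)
We pass from $(\bT^\ast,s)$ to a dual pair $(\bT,\theta)$, apply Theorem~\ref{thm:main-repeated} to $\chi:=R_{\bT}^{\bG}(\theta)$, and then extract a suitable irreducible constituent.

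\emph{Step 1: choosing $(\bT,\theta)$.} Let $\bT^\ast\ni s$ be the $F^\ast$-stable maximal torus given by the hypothesis. Let $(\bT,\theta)$ be a pair whose $\bG^F$-orbit corresponds under $\Pi$ to the orbit of $(\bT^\ast,s)$. By the order-preservation remarked above, $\ord(\theta)=\ord(s)$. Since $\theta$ is linear, $\QQ(\theta)=\QQ(\ze_{\ord(\theta)})$, so $c(\theta)=\ord(\theta)$ (up to the usual identification of conductor $2m$ with $m$ for odd $m$, which does not matter here since $\nu_p\ge2$). Hence $\lev(\theta)=\nu_p(\ord(s))\ge 2$.

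\emph{Step 2: the degree of $\chi$.} Dual tori satisfy $|\bT^F|=|\bT^{\ast F^\ast}|$, and $|\bG^F|=|{\bG^\ast}^{F^\ast}|$. Therefore
\[
\chi(1)=\pm|\bG^F:\bT^F|_{r'}=\pm|{\bG^\ast}^{F^\ast}:{\bT^\ast}^{F^\ast}|_{r'},
\]
which is prime to $p$ by hypothesis. Theorem~\ref{thm:main-repeated} now applies. Since $\max\{\lev(\theta),\lev(\chi)\}\ge2$, it gives $\lev(\chi)=\lev(\theta)=\nu_p(\ord(s))$.

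\emph{Step 3: a constituent of high level.} Write $\chi=\sum_{\eta}a_\eta\eta$ with $\eta\in\Irr(\bG^F)$ and $a_\eta\in\ZZ$. Then $\QQ(\chi)$ is contained in the compositum of the fields $\QQ(\eta)$ with $a_\eta\neq0$. The conductor of a compositum of abelian fields is the lcm of their conductors. So $\nu_p(c(\chi))$ equals the maximum of $\lev(\eta)$ over the constituents $\eta$, and some constituent $\eta$ satisfies $\lev(\eta)\ge\lev(\chi)=\nu_p(\ord(s))$. Every constituent of $R_{\bT}^{\bG}(\theta)$ lies in $\cE(\bG^F,s)$ by definition of the Lusztig series, so $\eta$ is the required character.

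The only step needing care is Step~1: one must confirm that $\Pi$ really matches orders, so that $\lev(\theta)=\nu_p(\ord(s))$, and that duality identifies $|\bT^F|$ with $|\bT^{\ast F^\ast}|$. Both facts are standard and are already noted in the discussion preceding the corollary, so I expect no serious obstacle.
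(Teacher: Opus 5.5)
Your proposal is correct and follows the paper's proof essentially step for step: pass to a $\Pi$-corresponding pair $(\bT,\theta)$ with $\ord(\theta)=\ord(s)$, use $|\bT^F|=|{\bT^\ast}^{F^\ast}|$ to see that $R_{\bT}^{\bG}(\theta)$ has $p'$-degree, apply Theorem~\ref{thm:main-repeated}, and take an irreducible constituent of level at least $\lev(\chi)$, which lies in $\cE(\bG^F,s)$ by definition. One small slip in Step~3: $\lev(\chi)$ is only bounded above by the maximum of the $\lev(\eta)$, not equal to it, since $\QQ(\chi)$ can be strictly smaller than the compositum of the $\QQ(\eta)$; the inequality is all you actually use.
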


\begin{proof}
The assumption that $s$ is semisimple implies $p\ne r$.
Let $\bT^\ast\le\bG^\ast$ be an $F^\ast$-stable maximal torus containing~$s$
such that $|{\bG^\ast}^{F^\ast}:{\bT^\ast}^{F^\ast}|$ is coprime to $p$. Let
$(\bT,\theta)$ be a pair in the $\bG^F$-orbit corresponding to the
${\bG^\ast}^{F^\ast}$-orbit of $(\bT^\ast,s)$ under $\Pi$. Then
$\ord(s)=\ord(\theta)$. By our assumption on~$s$, we have
$\nu_p(\ord(\theta))\ge 2$, or equivalently, $\lev(\theta)\ge 2$.

Note that, since $|\bT^F| = |{\bT^\ast}^{F^\ast}|$ by \cite[Lemma~2.5.2]{GM20},
it follows that $p$ does not divide $|\bG^F : \bT^F|$. Let
$\chi := R_{\bT}^{\bG}(\theta)$. Theorem~\ref{thm:main} then implies that
\[ \lev(\chi)=\lev(\theta)=\nu_p(\ord(s)). \]
Consequently, $\chi$ has an irreducible constituent of $p$-rationality level
at least $\nu_p(\ord(s))$, which necessarily lies in the Lusztig series
$\cE(\bG^F,s)$.
\end{proof}

Corollary~\ref{cor:main2-repeated} shows that 
if $\ord(s)$ has maximal
$p$-part among all elements of ${\bG^\ast}^{F^\ast}$ and $p$ does
not divide $|{\bG^\ast}^{F^\ast} : {\bT^\ast}^{F^\ast}|$ for some
$F^\ast$-stable maximal torus $\bT^\ast \subseteq \bG^\ast$
containing $s$ (in particular, Sylow $p$-subgroups of $\bG^F$ are
abelian), then
$\nu_p(\ord(s))$ is precisely the maximal $p$-rationality level of a
character in $\cE(\bG^F,s)$. More generally,
Theorem~\ref{thm:main} enables us to determine the maximal
$p$-rationality level of characters in certain Lusztig series of
some groups of Lie type, such as linear and unitary groups, as
illustrated below.

Let $q$ be a power of a prime $r$. Let $\bG =\GL_n(\overline{\FF}_r)$, and let
$F$ be the Frobenius endomorphism
\[ X = (x_{ij}) \longmapsto X^{(q)} := (x_{ij}^q), \]
or the twisted Frobenius
\[ F : X \longmapsto \bigl({}^t X^{(q)}\bigr)^{-1}. \]
Then $G := \bG^F$ is the general linear group $\GL_n(q)$ in the first case and
the general unitary group $\GU_n(q)$ in the second. For notational convenience,
we write $G = \GL_n(\eps q)$, where $\eps\in \{\pm 1\}$, with $\eps = +1$
corresponding to the linear case and $\eps = -1$ to the unitary case.
Note that the dual group $(\bG^\ast)^{F^\ast}$ is isomorphic to $\bG^F$.

For every semisimple element $s \in (\bG^\ast)^{F^\ast}$, it follows from
\cite[\S4]{Hung-Tiep23} that the field of values of every character in the
Lusztig series $\cE(G,s)$ is contained in $\QQ(\ze_{\ord(s)})$.

\begin{theorem}   \label{thm:GL}
 Let $p$ be a prime, $q$ be a power of a prime $r$, $n\in\ZZ_{\ge2}$, and
 $G\in\{\GL_n(\eps q),\SL_n(\eps q)\}$. Let $s\in G^\ast$ be a semisimple
 element with $\nu_p(\ord(s)) \ge 2$. Then the maximal $p$-rationality level
 among the characters in the Lusztig series $\cE(G,s)$ is $\nu_p(\ord(s))$ in
 each of the following cases:
 \begin{enumerate}[\rm(1)]
  \item $G=\GL_n(\eps q)$ and $p$ does not divide $|G^\ast :T^\ast|$ for
   some maximal torus $T^\ast\subseteq G^\ast$ containing $s$.
  \item $G=\SL_n(\eps q)$ and $p$ does not divide
   $\lcm(q-1,|G^\ast :T^\ast|)$ for some maximal torus
   $T^\ast\subseteq G^\ast$ containing $s$.
 \end{enumerate}
\end{theorem}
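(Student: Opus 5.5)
The upper bound comes for free: by \cite[\S4]{Hung-Tiep23}, quoted just before the statement, every character in $\cE(G,s)$ has field of values inside $\QQ(\ze_{\ord(s)})$. Hence its $p$-rationality level is at most $\nu_p(\ord(s))$. So the whole task is to produce one character in $\cE(G,s)$ of level at least $\nu_p(\ord(s))$.

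\emph{Case (1).} Here $G=\GL_n(\eps q)$ and the statement is an instance of Corollary~\ref{cor:main2-repeated}. Take the maximal torus $T^\ast\ni s$ with $p\nmid|G^\ast:T^\ast|$ and let $(\bT,\theta)$ correspond to $(\bT^\ast,s)$ under $\Pi$. Then $\ord(\theta)=\ord(s)$, so $\lev(\theta)\ge2$. Since $|\bT^F|=|{\bT^\ast}^{F^\ast}|$, we get $p\nmid|\bG^F:\bT^F|$, so $\chi=R_{\bT}^{\bG}(\theta)$ has $p'$-degree. Theorem~\ref{thm:main} then gives $\lev(\chi)=\nu_p(\ord(s))$, and some irreducible constituent of $\chi$, which lies in $\cE(G,s)$, has at least this level.

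\emph{Case (2).} Here $G=\SL_n(\eps q)$ and $G^\ast=\PGL_n(\eps q)$. The hypothesis $p\nmid(q-1)$ in the linear case is meant to control the passage between $\GL$ and $\SL$ (in the unitary case the relevant centre has order $q+1$; I expect the hypothesis to handle this through $|G^\ast:T^\ast|$, or via $p\nmid|T^\ast|$-type considerations, and this needs checking). Let $\tilde G=\GL_n(\eps q)\supseteq G$, with $|\tilde G:G|=q-\eps$, and choose the maximal torus $\tilde T^\ast\subseteq\tilde G^\ast$ that is the preimage of $T^\ast$. The argument then runs in three steps.
\begin{enumerate}[\rm(a)]
 \item Lift $s$ to a semisimple $\tilde s\in\tilde T^\ast$. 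Because $p\nmid(q-\eps)$, which is the order of the kernel of $\tilde G^\ast\to G^\ast$, the lift can be chosen so that $\nu_p(\ord(\tilde s))=\nu_p(\ord(s))$, for instance by taking the $p$-part of a lift appropriately.
 \item Apply Theorem~\ref{thm:main} directly to $G$. Take the pair $(\bT,\theta)$ of $\SL_n$ corresponding to $(T^\ast,s)$; order preservation gives $\lev(\theta)=\nu_p(\ord(s))\ge2$. Then $|G:T|=|G^\ast:T^\ast|$ is prime to $p$, since the $r'$-parts of the group orders agree and $|T|=|T^\ast|$. Theorem~\ref{thm:main} yields $\lev(R_{\bT}^{\bG}(\theta))=\lev(\theta)$, and a constituent in $\cE(G,s)$ has level at least $\nu_p(\ord(s))$. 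This step does not even need (a).
 \item The role of $p\nmid(q-1)$ is therefore, I expect, on the upper-bound side. The field-of-values statement from \cite{Hung-Tiep23} is for $\GL$ and $\GU$. For $\SL$ one should restrict characters of $\cE(\tilde G,\tilde s)$ to $G$. Constituents of such restrictions are conjugate under $\tilde G/G$, a group of order $q-\eps$, which is prime to $p$. Their fields therefore differ from $\QQ(\tilde\chi)\subseteq\QQ(\ze_{\ord(\tilde s)})$ only by an extension of degree prime to $p$, and this extension is generated inside cyclotomic fields of $p'$-conductor-relevant pieces. So the $p$-level is still at most $\nu_p(\ord(\tilde s))=\nu_p(\ord(s))$.
\end{enumerate}

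I expect step (c) to be the main obstacle: making rigorous that Clifford theory over a $p'$ quotient does not raise the $p$-part of the conductor. I would first try Galois action. Let $\sigma\in\Gal(\QQ(\ze_{|G|})/\QQ(\ze_{|G|_{p'}p^{\al}}))$ with $\al=\nu_p(\ord(s))$. This $\sigma$ fixes $\tilde\chi$, so it permutes the constituents of $\Res_G\tilde\chi$. The Galois group has $p$-power order, while the number of constituents divides $q-\eps$, which is prime to $p$. Since a $p$-group acting on a set of $p'$ size has a fixed point, some constituent is fixed by $\sigma$. Passing from ``some constituent is fixed'' to ``all constituents are fixed'' then needs the constituents to be distinguished by a $\sigma$-stable invariant, for example the central character or the $\tilde G$-orbit parametrization, and that is the step I would need to verify.
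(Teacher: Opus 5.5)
Your proposal is correct, and in case (2) it follows a genuinely different route from the paper.

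The paper never applies Theorem~\ref{thm:main} to $\SL_n(\eps q)$. Instead it lifts $s$ to $\wt s\in\wt G=\GL_n(\eps q)$ and uses $p\nmid(q-1)$ to get $\nu_p(\ord(\wt s))=\nu_p(\ord(s))$. It then obtains \emph{both} bounds for $\cE(\wt G,\wt s)$ from case (1), and transfers them to $\cE(G,s)$ via \cite[Proposition~2.6.16]{GM20} and \cite[Lemma~2.4(ii)]{Hung22}. The latter says that constituents of $\Res^{\wt G}_G(\wt\chi)$ have the same level as $\wt\chi$ when that level is at least $2$ and the index is prime to $p$.

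Your step (b) gets the lower bound by applying Corollary~\ref{cor:main2-repeated} directly to $\SL_n$, using $|G|=|G^\ast|$ and $|T|=|T^\ast|$. This is shorter, and it shows the lower bound needs only $p\nmid|G^\ast:T^\ast|$, with no regular embedding and no condition on $q-\eps$. Your step (c) replaces the citation of \cite{Hung22} by a self-contained Galois argument. That argument also bounds the level of every constituent uniformly, without a separate treatment of those $\wt\chi$ with $\lev(\wt\chi)\le 1$.

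The step you flag as the main obstacle closes in one line; no $\sigma$-stable invariant distinguishing the constituents is needed. Galois action commutes with $\wt G$-conjugation, i.e.\ $(\chi^g)^\sigma=(\chi^\sigma)^g$. By Clifford's theorem the constituents of $\Res^{\wt G}_G(\wt\chi)$ form a single $\wt G$-orbit. So one $\sigma$-fixed constituent forces all of them to be $\sigma$-fixed.

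Take $\sigma$ in $\Gal(\QQ(\ze_{|\wt G|})/\QQ(\ze_{|\wt G|_{p'}p^{\alpha}}))$ rather than defining it through $|G|$. Then $\sigma$ acts on $\QQ(\wt\chi)$ and fixes it, and this group is a $p$-group by the argument in the proof of Lemma~\ref{lem:2}.

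Your worry about the unitary case is justified, and the paper does not resolve it either. Its proof uses that $\ord(\wt s)$ divides $(q-1)\ord(s)$ and that $p\nmid(q-1)$. For $\GU_n(q)$, however, both the centre and $|\wt G:G|$ have order $q+1$. So the paper's argument and your step (c) both really need $p\nmid(q-\eps)$. The hypothesis $p\nmid|G^\ast:T^\ast|$ does not force this, already for $n=2$ and an odd prime $p$ dividing $q+1$. Only your lower bound from step (b) is unaffected.
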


\begin{proof}
(1) Let $\chi\in\cE(G,s)$. As noted above, we have
$\QQ(\chi)\le \QQ(\ze_{\ord(s)})$, and hence $\lev(\chi)\le\nu_p(\ord(s))$.
Corollary~\ref{cor:main2-repeated}, on the other hand, guarantees the
existence of a character in $\cE(G,s)$ whose $p$-rationality level is at
least $\nu_p(\ord(s))$.
\smallskip

(2) Note that the dual group of $G=\SL_n(\eps q)$ is $G^\ast=\PGL_n(\eps q)$.
Let $s\in G^\ast$ be a semisimple element satisfying $\nu_p(\ord(s)) \ge 2$,
and suppose that $p$ does not divide $\lcm(q-1,|G^\ast : T^\ast|)$ for
some maximal torus $T^\ast\subseteq G^\ast$ containing $s$. Let
$\wt{s}\in\wt{G}:=\GL_n(\eps q)$ be a preimage of $s$ under the natural
projection from $\wt{G}$ to $G$. Since $\ord(s)$ divides $\ord(\wt{s})$,
which divides $(q-1)\ord(s)$, our assumption on $p$ implies that
\[ \nu_p(\ord(\wt{s}))=\nu_p(\ord(s))\ge 2. \]

The Lusztig series $\cE(G,s)$ consists of irreducible constituents of the
restrictions $\Res^{\wt{G}}_G(\wt{\chi})$, where
$\wt{\chi}\in\cE(\wt{G},\wt{s})$. (See
\cite[Proposition~2.6.16]{GM20}, where this statement is proved for
arbitrary regular embeddings; note that
$\SL_n(\overline{\FF}_r)\to\GL_n(\overline{\FF}_r)$ is such a regular embedding.)
Moreover, if $\chi$ is an irreducible constituent of
$\Res^{\wt{G}}_G(\wt{\chi})$ for some $\wt{\chi}$ with $\lev(\wt{\chi})\ge 2$,
then $\lev(\chi)=\lev(\wt{\chi})$, by \cite[Lemma~2.4(ii)]{Hung22} and the
fact that $p\nmid (q-1)$. The desired result for $\SL_n(\eps q)$ now follows
from the corresponding statement for $\GL_n(\eps q)$.
\end{proof}

\section{Evidence for Conjecture
\ref{conj:Lusztig-induction}}\label{sec:ConjC}

In this section, we provide some evidence in support of
Conjecture~\ref{conj:Lusztig-induction}.

Recall that $\bG$ is a connected reductive algebraic group over
$\overline{\FF}_r$, and $F:\bG\to\bG$ is a Steinberg map. We refer the reader
to \cite[\S3.3]{GM20} for background on Lusztig induced characters
$R_{\bL \le \bP}^{\bG}(\psi)$, where $\bL$ is an $F$-stable Levi subgroup of a
parabolic subgroup $\bP$ of $\bG$, and $\psi \in \Irr(\bL^F)$. We do
want to recall the character formula for Lusztig induction,
which will be used several times (see
\cite[Theorem~3.3.12]{GM20}).
\[\begin{aligned}
R_{\bL}^{\bG}&(\psi)(g)=\\&\frac{1}{|\bL^F||\bC^\circ_\bG(s)^F|}
\sum_{x\in\bG^F:\, x^{-1}sx\in\bL} |\bC^\circ_{x\bL x^{-1}}(s)^F|
\sum_{v\in \bC^\circ_{x\bL x^{-1}}(s)^F_{uni}}    Q_{\bC^\circ_{x\bL
x^{-1}}(s)}^{\bC^\circ_\bG(s)}(u,v^{-1})\psi(x^{-1}svx).\end{aligned}
\]
Here, $g\in\bG^F$ has Jordan decomposition $g=su=us$. Also,
$Q_{\bC^\circ_{x\bL x^{-1}}(s)}^{\bC^\circ_\bG(s)}$ denotes the two-parameter
Green function associated with $\bC^\circ_{x\bL x^{-1}}(s)$ and
$\bC^\circ_\bG(s)$, which is a rational-valued function 
on pairs $(u,v)$ where
$u$ and $v$ are unipotent elements of $\bC^\circ_\bG(s)$ and
$\bC^\circ_{x\bL x^{-1}}(s)$, respectively. See \cite[Definition~3.3.11]{GM20}.

\subsection{The case $p=r$}   \label{sec:casep=r}

In this section we treat the case $p=r$ of the conjecture.
Accordingly, $\lev(\cdot)$ will be the $r$-rationality level.

We aim to prove the following.

\begin{theorem}   \label{thm:p=r}
 Assume Notation~\ref{notation}. Assume furthermore that $\psi\in\Irr(\bL^F)$
 has degree prime to $r$. Consider the Lusztig induced character
 $\chi:=R_{\bL \le \bP}^{\bG}(\psi)\in\ZZ\Irr(\bG^F)$.
 Then exactly one of the following holds.
 \begin{enumerate}[\rm(1)]
  \item Both $\chi$ and $\psi$ are almost $r$-rational; or
  \item $\lev(\chi)=\lev(\psi)=2$.
 \end{enumerate}
 In particular, Conjecture~\ref{conj:Lusztig-induction} holds in the case $p=r$.
\end{theorem}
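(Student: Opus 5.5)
The plan is to pin everything down through two facts: the character formula for Lusztig induction, and an a priori bound on the $r$-rationality level of $r'$-degree characters in defining characteristic.

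\emph{Step 1: reverse inclusion.} In the character formula for $R_{\bL\le\bP}^{\bG}(\psi)$ recalled above, the two-parameter Green functions are rational-valued. So every value $\chi(g)$ is a $\QQ$-linear combination of values of $\psi$, which gives $\QQ(\chi)\subseteq\QQ(\psi)$ and hence $\lev(\chi)\le\lev(\psi)$. In particular, if $\psi$ is almost $r$-rational then so is $\chi$, and we are in case~(1).

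\emph{Step 2: an upper bound $\lev(\psi)\le2$.} This should use that $\psi(1)$ is prime to $r$. I would invoke the known description of $r'$-degree irreducible characters of finite reductive groups in defining characteristic. Up to a linear character of order prime to $r$, they are semisimple characters, that is, constituents of duals of Gelfand--Graev characters. Their values at $r$-singular elements are then controlled by the values of regular characters of a Sylow $r$-subgroup $\bU^F$ of $\bL^F$, which lie in $\QQ(\ze_r)$ for $r$ odd and in $\QQ(\ze_4)$ for $r=2$.

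From this, $\lev(\psi)\le1$ if $r$ is odd and $\lev(\psi)\le2$ if $r=2$. By Step~1 the same bounds hold for $\chi$. Thus case~(1) covers everything except the situation $r=2$ with $\lev(\psi)=2$, and there it remains to show $\lev(\chi)=2$. Note also that $\chi(1)=\pm|\bG^F:\bL^F|_{r'}\psi(1)$ is automatically prime to $r$. Therefore the conjecture for $p=r$ follows once (1)--(2) is established, and the two alternatives are clearly exclusive.

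\emph{Step 3: the main obstacle, $\lev(\psi)=2\Rightarrow\lev(\chi)=2$ when $r=2$.} I would first try a Galois argument. Let $\sigma\in\Gal(\QQ(\ze_{|\bG^F|})/\QQ)$ fix all roots of unity of odd order and send $i\mapsto -i$. Lusztig induction commutes with $\sigma$ because the Green functions are rational. So if $\chi^\sigma=\chi$, then $R_{\bL\le\bP}^{\bG}(\psi^\sigma-\psi)=0$, while $\psi^\sigma\ne\psi$.

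To rule this out I would evaluate at suitable unipotent elements $u$. For $s=1$ the formula gives
\[\chi(u)=\frac{1}{|\bL^F|}\sum_{v\in\bL^F_{uni}}Q_{\bL}^{\bG}(u,v^{-1})\,\psi(v).\]
I would choose $u$ regular unipotent in $\bG^F$, with the aim of showing that the imaginary part of $\psi$ on regular unipotent classes of $\bL^F$ survives. Here I would use the known values of Green functions at regular unipotent elements (where $Q$ is essentially supported on regular $v$, with values $\pm1$) together with the Gelfand--Graev description of $\psi$ from Step~2.

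If this direct computation gets stuck, the fallback is the $\ell$-invariant method of Section~\ref{sec:invariant}. One would show $\ell(\Res^{\bG^F}_{\bU^F}\chi)=2$ by comparing it with $\ell(\Res^{\bL^F}_{\bU_{\bL}^F}\psi)$, as in Lemma~\ref{lem:3}, and then apply Lemma~\ref{lem:2}.
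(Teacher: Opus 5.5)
Your Step~1 and the closing bookkeeping are fine. The gap is exactly where the theorem has content: proving $\lev(\chi)\ge2$ when $\lev(\psi)=2$. Step~3 is a list of strategies rather than an argument, and none of them reaches the case that really needs work.
\begin{itemize}
\item The identity $R_{\bL\le\bP}^{\bG}(\psi^\sigma-\psi)=0$ is not a contradiction, since Lusztig induction is far from injective. You still need a nonvanishing evaluation.
\item The facts you want at regular unipotent elements are established under good-characteristic (often large-$q$) hypotheses. These are the support of $Q_{\bL}^{\bG}(u,\cdot)$ on regular $v$, values $\pm1$, and compatibility with Gelfand--Graev characters. But the only nontrivial situation turns out to be $r=2$, $\bG^F=\tw2F_4(2)$: bad characteristic, with $q^2=2$.
\item The $\ell$-invariant fallback rests on Lemma~\ref{lem:3}, i.e.\ on Mackey's formula for ordinary induction from a subgroup containing a Sylow subgroup. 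At best it handles Harish-Chandra induction. It does not reach the non-split Levi subgroup $\bL^F\cong\tw2B_2(2)\times C_5$ of $\tw2F_4(2)$. Even in the split case it needs $\ell(\Res^{\bL^F}_{\bU_{\bL}^F}\psi)=2$, which you cannot verify without knowing what $\psi$ is.
\end{itemize}

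That is the missing idea: you never identify where level~$2$ comes from, and your Step~2 description is wrong precisely there. The odd-degree characters of level~$2$ include the two cuspidal unipotent characters of $\tw2B_2(2)\cong C_5\rtimes C_4$. These are linear of order~$4$ with field $\QQ(i)$, and they are not semisimple characters twisted by an $r'$-order linear character. The bound $\lev(\psi)\le2$ is itself true. The paper gets it from Malle's $\sigma$-invariance result, \cite{ILNT} for $r=2$, and Atlas checks of the small exceptions.

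The paper then passes to $\bL_{sc}^F\cong\prod_{j}\wt{\bL}_j^{F^{l_j}}$ and shows that $\lev(\psi)\ge2$ forces:
\begin{itemize}
\item $r=2$ and $\lev(\psi)=2$;
\item an $F$-stable direct factor $\bL_1$ of $\bL$ with $\bL_1^F\in\{\tw2B_2(2),\tw2F_4(2)\}$, on which $\psi$ restricts to a multiple of a level-$2$ character. This uses that $B_2$ and $F_4$ are centreless in characteristic~$2$.
\end{itemize}
An $F_4$ factor, or a $B_2$ factor inside a $B_2$ component of $\bG$, then splits off $\bG$ too. In that case $\chi=\varphi_1\boxtimes R_{\bL'}^{\bG'}(\psi')$ visibly has level $\ge2$. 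The one remaining case is $\bG^F=\tw2F_4(2)$ with $\bL^F\cong\tw2B_2(2)\times(\text{torus})$. It is settled by explicit computation. For unipotent $\psi$ the paper uses CHEVIE decompositions of $R_{\bL}^{\bG}$ together with values at specific unipotent classes in the character table. For non-unipotent $\psi$ it uses the Jordan-decomposition bijection $\cE(\bL^F,s)\to\cE(\bG^F,s)$.
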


Note that an $F$-stable Levi subgroup of a parabolic subgroup $\bP$ of $\bG$ is
itself a connected reductive algebraic group defined over $\overline{\FF}_r$.
We then have
\[ \bL = \bZ(\bL)^\circ \cdot [\bL,\bL], \]
where $\bZ(\bL)^\circ$ is the connected component of the center $\bZ(\bL)$
containing the identity element. Note also that $[\bL,\bL]$ is semisimple,
so we suppose that
\[ [\bL,\bL] = \bL_1 \cdot  \cdots \bL_n \]
is a decomposition of $[\bL,\bL]$ as a central product of closed normal
simple subgroups.

Let $\bL_{sc} := [\bL,\bL]_{sc}$ be the simply connected cover of
$[\bL,\bL]$; it is itself a semisimple group of simply connected
type with the same root system as $[\bL,\bL]$. By
\cite[Proposition~1.5.10]{GM20},
\[ \bL_{sc} = \wt{\bL}_1 \times \cdots \times \wt{\bL}_n, \]
where each $\wt{\bL}_i$ is the (simple) simply connected cover of $\bL_i$.

Note that $F$ permutes the simple factors $\bL_1,\dots,\bL_n$, thereby
inducing an action on the index set $\{1,\dots,n\}$. Let
$\cJ\subseteq\{1,\dots,n\}$ be such that $\{\bL_j \mid j\in \cJ\}$ is a
complete set of representatives for the $F$-orbits, and let $l_j$ denote
the length of the $F$-orbit containing $\bL_j$. We then have
\begin{equation}   \label{eq:LscFdirectproduct}
  \bL_{sc}^F \cong \prod_{j \in \cJ} \wt{\bL}_j^{F^{l_j}}.
\end{equation}
(See \cite[Corollary~1.5.16]{GM20}.) Consider the central isogeny
$\pi: \bL_{sc} \to [\bL,\bL]$. Its restriction to $\wt{\bL}_j$ gives the
corresponding isogeny $\wt{\bL}_j\to\bL_j$ for all $j\in\{1,\dots,n\}$. Let
\[ M_j := \pi(\wt{\bL}_j^{F^{l_j}}) \quad \text{for }j\in\cJ. \]
Then
\[ M := \pi(\bL_{sc}^F) \cong \prod_{j \in \cJ} M_j. \]

Note that $M_j$ ($j\in \cJ$) is normal in $(\prod_{k\in \cJ(j)}\bL_k)^F$,
where $\cJ(j)$ is the $F$-orbit of $j$ in $\{1,\dots,n\}$ (see
\cite[1.23]{DL76}). Since $\bL$ is the central product of the $[\bL,\bL]$
with $\bZ(\bL)$ and $[\bL,\bL]$ is the central product of the $\bL_i$, it
follows that $M_j$ is normal in $\bL^F$ for every $j\in\cJ$. Moreover,
$M$ is normal in $\bL^F$ of index prime to $r$.

Irreducible characters of $r'$-degree of finite reductive groups turn out to
have relatively low $r$-rationality level -- at most~$2$, indeed. Recall that
a character $\chi\in\ZZ\Irr(G)$ is said to be \emph{almost $p$-rational}
if its $p$-rationality level is at most~$1$, that is, $\QQ(\chi)\le
\QQ\bigl(\ze_{p|G|_{p'}}\bigr)$. (This generalizes the usual
notion of $p$-rationality, see \cite{Hung-Malle-Maroti21}.)

\begin{lemma}\label{lem:Malle}
 Let $H := \bH^F$, where $\bH$ is a simple algebraic group of simply connected
 type defined in characteristic~$r$, and $F:\bH\to \bH$ is a Steinberg map.
 Then every irreducible character of $H$ of degree prime to $r$ has
 $r$-rationality level at most $2$. Moreover, every such character is almost
 $r$-rational, except when $H = \tw2B_2(2)$ or $\tw2F_4(2)$.
\end{lemma}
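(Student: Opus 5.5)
We will reduce the statement to known facts about $r'$-degree characters of finite groups of Lie type in defining characteristic, and then read off fields of values.

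\textbf{Step 1: the exceptional-isogeny case.} Assume first that $H$ is not one of the groups with exceptional isogeny. Concretely, exclude the types $B_2,G_2,F_4$ in characteristic $2,3,2$ with a Steinberg map whose square is a Frobenius. Then $F$ is a genuine Frobenius for some $\FF_q$-structure with $q$ a power of $r$.

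\textbf{Step 2: $r'$-degree characters are semisimple characters.} By the classification of $r'$-degree characters (Malle, or the Green--Lehrer--Lusztig description via the Gelfand--Graev/Steinberg duality), an irreducible $\chi$ of $r'$-degree is, up to sign, $D_G$ of a semisimple character. Equivalently, $\chi$ is a constituent of the Harish-Chandra induction $\Ind_B^H(\lambda)$ from a Borel $B=UT$ (with $\lambda$ linear on $T$) whose degree is the $r'$-part of $|H:\bC_{H^*}(s)|$. In particular, the restriction of $\chi$ to a Sylow $r$-subgroup $U$ is a sum of linear characters of $U$ that factor through $U/[U,U]$.

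\textbf{Step 3: values on $r$-elements.} The values of $\chi$ at $r$-singular elements are controlled by its restriction to $U$, hence by linear characters of $U/[U,U]$. Outside the very small cases, this quotient is elementary abelian, so those values lie in $\QQ(\ze_r)$.

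\textbf{Step 4: assembling the level.} The semisimple part contributes only $r'$-roots of unity. This uses that $\QQ(\chi)$ is generated by values $\chi(su)$ lying in $\QQ(\ze_{|H|_{r'}},\ze_r)$. To justify it, use the Jordan decomposition of characters: the values at $su$ are combinations of $\theta(s)$ and unipotent values of $\bC(s)^F$. Those unipotent values lie in $\QQ(\ze_r)$ by Geck's results on the character fields of unipotent-supported functions. Hence $\lev(\chi)\le1$.

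\textbf{Step 5: small and exceptional groups.} The remaining cases are those where $U/[U,U]$ is not elementary abelian, or where $F$ is a Suzuki/Ree map. Small $q$ with $p=2$ (such as $\SL_2(2)$, $\mathrm{Sp}_4(2)$, $G_2(2)$) must be checked explicitly. For the Suzuki and Ree groups ${}^2B_2(q^2)$, ${}^2G_2(q^2)$, ${}^2F_4(q^2)$, we check the known character tables (Suzuki; Ward; Malle for ${}^2F_4$). For $q^2>2$, the $r'$-degree characters have values in $\QQ(\ze_{4})$ or $\QQ(\ze_3)$ times $r'$-roots of unity, and these are almost $r$-rational except at $2$. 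The degenerate cases ${}^2B_2(2)$ (order $20$, containing characters with values $\pm i$) and ${}^2F_4(2)$ (whose derived group is Tits, with some $\sqrt{-1}$-type values) give level exactly $2$ via $\ze_4$. In every case the values stay in $\QQ(\ze_4,\ze_{|H|_{r'}})$, so the level is at most $2$.

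\textbf{Main obstacle.} The hardest part is making Step 3 rigorous uniformly, i.e. showing that values of $r'$-degree characters on unipotent elements lie in $\QQ(\ze_r)$ without a Zelevinsky-type argument. If this proves too unwieldy, the fallback is to cite existing results on fields of values of unipotent characters and on rationality of generalized Gelfand--Graev characters, and finish the finitely many small cases with GAP.
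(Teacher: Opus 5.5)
There is a genuine gap: Steps 2--4 rest on statements that are false. First, the duality in Step 2 is reversed. The $r'$-degree characters are (for $q$ not too small) the semisimple characters themselves, i.e.\ $\pm D_G$ of regular characters. More seriously, semisimple characters need not lie in the principal series: for $\SL_2(q)$ the cuspidal characters $-R_{\bT}^{\bG}(\theta)$ of degree $q-1$ are semisimple. Nor are their restrictions to $U$ sums of linear characters. In $\SL_3(q)$ with $q>2$, take $P$ a maximal parabolic with Levi complement $L\cong\GL_2(q)$ and $1\neq\alpha\in\Irr(\FF_q^\times)$. Then $\Ind_P^{H}(\alpha\circ\det)$ is irreducible of degree $q^2+q+1$. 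Since $\alpha\circ\det$ is trivial on unipotent elements, its restriction to $U$ is the permutation character of $U$ on $\mathbb{P}^2(\FF_q)$, which contains the irreducible characters of degree $q$ of the unitriangular group $U$. So Step 3 has nothing to stand on.

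Step 4 does not rescue it. Jordan decomposition is a degree-compatible bijection with unipotent characters of a centralizer in the dual group, not a formula for $\chi(su)$. Expressions through $\theta(s)$ and rational Green functions exist only for uniform functions. Moreover, the rationality of unipotent values you invoke fails unless the $r'$-degree hypothesis is used: for $r=2$ the cuspidal unipotent characters of $\tw2B_2(q^2)$ take values involving $\sqrt{-1}$ at unipotent elements of order $4$. That hypothesis enters your argument only through the false Step 2. Note also that the lemma is trivial whenever a Sylow $r$-subgroup of $H$ has exponent $r$, since then all values lie in $\QQ(\ze_{r|H|_{r'}})$. Its whole content therefore sits in small characteristic, above all $r=2$. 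There, as the paper's closing Remark notes, almost $r$-rationality fails for general characters, so no degree-independent result on unipotent values can do the job.

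The paper never computes values. It uses $\sigma\in\Gal(\QQ^{ab}/\QQ)$, which fixes $r'$-roots of unity and sends $r$-power roots of unity $\xi\mapsto\xi^{1+r}$. For $r$ odd, being almost $r$-rational is equivalent to being $\sigma$-fixed. For $r=2$ the fixed field also contains $\sqrt{-2}$, and the Isaacs--Liebeck--Navarro--Tiep theorem ($\sqrt{-1}\in\QQ(\chi)$ whenever $\chi$ has odd degree and $\lev(\chi)\ge2$) restores the equivalence. The paper then quotes Malle's proof that every $r'$-degree character is $\sigma$-fixed unless $H\in\{\tw2B_2(2),\tw2G_2(3),G_2(2),G_2(3),\tw2F_4(2),F_4(2)\}$, and settles these six groups from the Atlas.

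If you want $U$ to play a role, the sound mechanism is Galois-theoretic, not restriction to $U$:
\begin{enumerate}[\rm(1)]
 \item For a Frobenius map, a nondegenerate linear character of $U$ factors through the elementary abelian product of simple root subgroups, so it is $\sigma$-fixed.
 \item Hence every Gelfand--Graev character is $\sigma$-fixed.
 \item $\sigma$ stabilizes each $\cE(H,s)$, since $s$ has $r'$-order. So the unique constituent of a Gelfand--Graev character in $\cE(H,s)$ is $\sigma$-fixed.
 \item Therefore its Alvis--Curtis dual, the semisimple character, is $\sigma$-fixed.
\end{enumerate}
You would still need a precise list of the groups where $r'$-degree characters are not all semisimple, rather than the open-ended ``such as'' of Step 5. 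Already $\mathrm{Sp}_4(2)$ has a nontrivial unipotent character of degree $1$.

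Finally, Step 5 is wrong as written. For $q^2>2$ you must show that the odd-degree characters of $\tw2B_2(q^2)$ and $\tw2F_4(q^2)$ are $2$-rational, not merely that their values lie in $\QQ(\ze_4,\ze_{|H|_{2'}})$. For example, the $\QQ(\sqrt{-1})$-valued cuspidal unipotent characters of $\tw2B_2(q^2)$ have degree $\tfrac{q}{\sqrt2}(q^2-1)$, which is odd only when $q^2=2$.
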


\begin{proof}
Let $\sigma\in\Gal(\QQ^{ab}/\QQ)$ denote the automorphism that fixes all roots
of unity of order prime to $r$ and sends each $r$-power root of unity $\xi$ to
$\xi^{1+r}$. Let $\chi$ be a virtual character of a finite group $G$ of order
divisible by~$r$. We shall use the same notation $\sigma$ for its restriction
to $\QQ(\ze_{|G|})$.

When $r$ is odd, the group
$\Gal(\QQ(\ze_{|G|})/\QQ(\ze_{r|G|_{r'}}))$ is canonically isomorphic to
$\Gal(\QQ(\ze_{|G|_r})/\QQ(\ze_r))$, and therefore is a cyclic $r$-group.
The fixed field of $\sigma$ in $\QQ(\ze_{|G|})$ is $\QQ(\ze_{r|G|_{r'}})$.
It follows that $\chi$ is almost $r$-rational if and only if it is fixed by
$\sigma$. This equivalence does not hold in general when $r=2$.

Now suppose $r=2$. The fixed field of $\sigma$ inside $\QQ(\ze_{|G|})$ is
$\QQ(\ze_{|G|_{2'}})$ when $|G|_2 \le 4$, and $\QQ(\sqrt{-2},\ze_{|G|_{2'}})$
when $|G|_2 \ge 8$. There do exist $\sigma$-fixed characters whose field of
values is $\QQ(\sqrt{-2})$; in particular, their $2$-rationality level is
equal to $3$. However, by the main result of \cite{ILNT}, if $\chi$ is an
irreducible character of odd degree with $\lev(\chi)\ge 2$, then
$\sqrt{-1}\in \QQ(\chi)$. Consequently, for irreducible characters of odd
degree in the case $r=2$, the same conclusion holds: namely, $\chi$ is almost
$r$-rational if and only if it is fixed by $\sigma$.

It was shown in the proof of \cite[Proposition~2.4]{Malle19} that every
irreducible character of $H$ of degree prime to $r$ is $\sigma$-fixed, and
hence almost $r$-rational, except possibly when
\[ H\in\{\tw2B_2(2),\ {}^2G_2(3),\ G_2(2),\ G_2(3),\ \tw2F_4(2),\ F_4(2)\}. \]
For these exceptions, the character tables in \cite{Atlas} show that only
$\tw2B_2(2)$ and $\tw2F_4(2)$ have odd-degree irreducible characters that are
not almost $2$-rational. These characters indeed have $2$-rationality level
exactly $2$, completing the proof.
\end{proof}

\begin{lemma}   \label{lem:p=r}
 Assume the hypotheses of Theorem~\ref{thm:p=r} and suppose $\psi\in
 \Irr_{r'}(\bL^F)$ with $\lev(\psi)\ge 2$. Then $\lev(\psi)=2$. Moreover, the
 following hold:
 \begin{enumerate}[\rm(a)]
  \item $r=2$,
  \item $\bL$ has a simple component, say $\bL_1$, of type $B_2$ or $F_4$,
  \item $\bL_1$ is a direct factor of $\bL$,
  \item $\bL_1$ is $F$-stable,
  \item $\bL_1^F \in \{\tw2B_2(2), \tw2F_4(2)\}$, and
  \item $\Res^{\bL^F}_{\bL_1^F}(\psi)$ is an integer multiple of an
   irreducible character of level $2$ of $\bL_1^F$.
 \end{enumerate}
\end{lemma}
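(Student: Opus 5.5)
We reduce to the normal subgroup $M=\pi(\bL_{sc}^F)\cong\prod_{j\in\cJ}M_j$ of $\bL^F$, which has index prime to $r$, and then apply Lemma~\ref{lem:Malle} factor by factor.

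\emph{Reduction to $M$.} By Clifford theory, $\Res^{\bL^F}_M(\psi)=e\sum_{t}\eta^{t}$ is a multiple of the sum of the $\bL^F$-conjugates of some $\eta\in\Irr(M)$. Both $e$ and the orbit length divide $|\bL^F:M|$, which is prime to $r$, and $\eta(1)$ is prime to $r$ because it divides $\psi(1)$. The quotient $\bL^F/M$ is an $r'$-group: it is generated by the image of $\bZ(\bL)^{\circ F}$ together with the image of $[\bL,\bL]^F/\pi(\bL_{sc}^F)$, which is a quotient of a subgroup of the finite center. We compare $\lev(\psi)$ with the levels of the constituents of $\Res_M(\psi)$ via Galois action. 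Take $\sigma$ fixing $r'$-roots of unity and acting on the $r$-part, as in the proof of Lemma~\ref{lem:Malle}. If $\sigma$ fixes $\eta$ (and hence the whole $\bL^F$-orbit, since $\sigma$ commutes with conjugation), then $\psi^\sigma$ and $\psi$ lie over the same $\eta$. They therefore differ by a linear character of the $r'$-group $\bL^F/M$ (Gallagher's theorem, applied through the stabilizer). We would then argue that this forces $\psi^\sigma=\psi$: the set of $\psi$ lying over the orbit has $r'$ size and is permuted by the $r$-group generated by $\sigma$ modulo the fixed field, so a counting or fixed-point argument applies. The exact citation here is a variant of \cite[Lemma~2.4]{Hung22}.

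\emph{Consequences for the level.} By the previous step, $\lev(\psi)\ge 2$ forces $\eta$ to have level $\ge 2$. Conversely, once $\eta$ has level $\le 2$, so does $\psi$, using the ILNT argument to handle the $\sqrt{-2}$ issue at $r=2$. Each $\eta=\boxtimes_j\eta_j$ with $\eta_j\in\Irr_{r'}(M_j)$. Inflating to $\wt{\bL}_j^{F^{l_j}}$, which is a simple simply connected group over a larger field, Lemma~\ref{lem:Malle} gives $\lev(\eta_j)\le2$. Moreover, $\lev(\eta_j)=2$ only when $r=2$ and $\wt{\bL}_j^{F^{l_j}}\in\{\tw2B_2(2),\tw2F_4(2)\}$. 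Since $\lev(\eta)=\max_j\lev(\eta_j)$, we get $\lev(\psi)=2$ together with (a), (b), and (e) for some index $j$, which we call $1$.

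\emph{Structural conditions.} Condition (d) holds because $\tw2B_2(2)$ and $\tw2F_4(2)$ arise as $\wt{\bL}_1^{F^{l_1}}$ with $F^{l_1}$ having $q$-value $\sqrt2$. If $l_1>1$, then $F$ itself would have to be a square root of such a twisted Frobenius, which is impossible for these very twisted Steinberg maps. Hence $l_1=1$. Condition (c) holds because groups of type $B_2$ and $F_4$ in characteristic $2$ have trivial fundamental group at the relevant level, i.e.\ the center is trivial. So $\pi$ is an isomorphism on $\wt{\bL}_1$, and $\bL_1$ meets $\bZ(\bL)^\circ$ and the other factors trivially; it is therefore a direct factor. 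Consequently $\bL_1^F=M_1\trianglelefteq\bL^F$ is a direct factor of $\bL^F$. Condition (f) then follows: $\psi=\psi_1\times\psi'$, so $\Res_{\bL_1^F}(\psi)=\psi'(1)\psi_1$ with $\psi_1$ of level $2$.

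The main obstacle is the Clifford step, namely that $\lev(\psi)\ge2$ forces an $M$-constituent of level $\ge2$. The issue is that $\psi$ could a priori gain $r$-irrationality from extension data over $M$. We would try to control this using the fact that $\bL^F/M$ is an $r'$-group, so that all extension cocycles and linear twists take values in $r'$-roots of unity.
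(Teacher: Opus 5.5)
\textbf{Overall.} Your outline is the paper's proof. You pass to $M=\pi(\bL_{sc}^F)$, use $\sigma$ to show that a constituent $\eta$ of $\Res^{\bL^F}_M(\psi)$ has level at least $2$, bound the tensor factors by Lemma~\ref{lem:Malle}, and split off $\bL_1$ using that its center is trivial.

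\textbf{The gap.} It is the step you flag yourself: passing $\sigma$-invariance, and the bound $\lev\le 2$, from $\eta$ up to $\psi$. The paper does not prove this step; it cites \cite[Lemma~4.2]{Navarro-Tiep21} for the $\sigma$-invariance and \cite[Lemma~2.4]{Hung22} for $\lev(\psi)=\lev(\eta)$. Your sketch does not establish it, for two reasons.
\begin{enumerate}[\rm(i)]
\item If you only know that $\bL^F/M$ is an $r'$-group, Gallagher's theorem does not give what you need. It also requires $\eta$ to extend to its stabilizer $I$, and even then it gives $\Irr(I\mid\eta)=\{\hat\eta\beta\}$ with $\beta\in\Irr(I/M)$ possibly nonlinear. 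So $\psi$ and $\psi^\sigma$ need not differ by a linear character.
\item A fixed-point count only shows that \emph{some} character over $\eta$ is $\sigma$-fixed, not that $\psi$ is.
\end{enumerate}
Separately, $\bL^F$ is not generated by $\bZ(\bL)^{\circ F}$ and $[\bL,\bL]^F$ in general: already $\GL_n(q)\ne\bZ(\GL_n)^F\SL_n(q)$ when $\gcd(n,q-1)>1$. So your reason for $r\nmid|\bL^F:M|$ is wrong, although the conclusion is true.

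\textbf{How to close it.} The missing fact is that $\bL^F/M$ is \emph{abelian}. Apply Lang's theorem to $\bZ(\bL)^\circ\times\bL_{sc}\to\bL$; its finite central kernel lies in the connected torus $\bZ(\bL)^\circ\times\pi^{-1}(\bT\cap[\bL,\bL])$. This gives $\bL^F=\bT^F M$ for any $F$-stable maximal torus $\bT\le\bL$.

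Now let $\psi,\psi'\in\Irr(\bL^F)$ lie over the same $\bL^F$-orbit. Then
\[
0\ne\bigl[\Res^{\bL^F}_M(\psi),\Res^{\bL^F}_M(\psi')\bigr]=\sum_{\lambda\in\Irr(\bL^F/M)}[\psi,\psi'\lambda],
\]
which forces $\psi=\psi'\lambda$ with $\lambda$ linear.

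Let $\tau$ be a Galois automorphism of $r$-power order fixing $r'$-roots of unity, with $\eta^\tau=\eta$. Then $\psi^\tau=\psi\lambda$, where $\lambda$ has $r'$-order and so satisfies $\lambda^\tau=\lambda$. Hence $\psi^{\tau^k}=\psi\lambda^k$ for all $k$. Take $k$ to be the order of $\tau$; it is prime to the order of $\lambda$. Then $\lambda^k$, and hence $\lambda$, stabilizes $\psi$, so $\psi^\tau=\psi$.

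Apply this to every $\tau$ in the $r$-groups $\Gal(\QQ(\ze_{|\bL^F|})/\QQ(\ze_{r^j|\bL^F|_{r'}}))$ for $j=1,2$. You get exactly the two implications you need:
\[
\lev(\eta)\le1\Rightarrow\lev(\psi)\le1 \quad\text{and}\quad \lev(\eta)\le2\Rightarrow\lev(\psi)\le2 .
\]
This avoids \cite{ILNT} entirely. If you insist on the single $\sigma\colon\xi\mapsto\xi^{1+r}$, you need \cite{ILNT} in the first implication at $r=2$, not in the second as you wrote.

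\textbf{On $l_1=1$.} Your reason for $l_1=1$ is not an argument, and the claim can fail. On $\mathrm{Sp}_4\times\mathrm{Sp}_4$ in characteristic $2$, let $\gamma$ be the exceptional isogeny, so $\gamma^2$ is the standard Frobenius. Then $F(x,y)=(\gamma(y),x)$ is a Steinberg map with fixed points isomorphic to $\tw2B_2(2)$ and $l_1=2$. The paper also asserts $l_1=1$ without proof. In general, (c)--(f) should be stated for the $F$-stable product $\prod_{k\in\cJ(1)}\bL_k$ rather than for $\bL_1$.
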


\begin{proof}
We keep the notation above.
Let $\phi \in \Irr(M)$ be a character lying under $\psi$, and let $\varphi$
denote its inflation to $\bL_{sc}^F$. If $\lev(\phi) \le1$, so that $\phi$ is
almost $r$-rational, then $\phi$ is fixed by the Galois automorphism $\sigma$
considered in Lemma~\ref{lem:Malle}. By \cite[Lemma~4.2]{Navarro-Tiep21},
it
would follow that $\psi$ is also $\sigma$-fixed, a contradiction. Hence
\[ \lev(\varphi)=\lev(\phi)\ge 2. \]

Recall from \eqref{eq:LscFdirectproduct} that $\bL_{sc}^F$ is the
direct product of the groups $M_j:=\wt{\bL}_j^{F^{l_j}}$
($j\in \cJ$). Accordingly, $\varphi$ is an exterior tensor product of
characters, say $\varphi_j\in\Irr(M_j)$:
\[ \varphi=\bigboxtimes_{j\in\cJ}\varphi_j. \]
Thus
\[ \lev(\varphi)=\max\{\lev(\varphi_j)\mid j\in \cJ\}. \]
By Lemma~\ref{lem:Malle}, we have $\lev(\varphi_j)\le 2$, and hence
$\lev(\varphi)\le 2$. Combined with the previous paragraph, this yields
$\lev(\varphi)=2$, and therefore $\lev(\phi)=2$. Applying
\cite[Lemma~2.4]{Hung22}, we obtain $\lev(\psi)=\lev(\phi)=2$, which
proves the first part of the lemma.

Clearly $\lev(\psi) = 2$ occurs if and only if $\lev(\varphi)=2$,
which means that $\lev(\varphi_j)=2$ for some $j\in \cJ$. Without
loss, we assume that
\[ \lev(\varphi_1)=2. \]
Again by Lemma~\ref{lem:Malle}, we then have
$\wt{\bL}_1^{F^{\ell_1}}\in \{\tw2B_2(2), \tw2F_4(2)\}$.
In particular, this forces $r=2$ and $\ell_1=1$. Furthermore,
$\wt{\bL}_1$ is of type $F_4$ or $B_2$. Note that $F_4$ has
only one isogeny type in all characteristics. While $B_2$ has two
isogeny types, in even characteristic, the finite groups of their
$F$-fixed points are isomorphic. Hence
$\bL_1^F=\wt{\bL}_1^{F^{\ell_1}}\in \{\tw2B_2(2), \tw2F_4(2)\}$.

Note that the simple component $\bL_1$, being of type $F_4$ or $B_2$ in
characteristic $2$, has trivial center (see \cite[Theorem~1.12.5]{GLS98}).
Therefore $\bZ(\bL)$ will split off as a direct factor of $\bL$. Passing to
the finite groups of $F$-fixed points, we have that $\bL_1^F$ is a direct
factor of $\bL^F$. Consequently, $\Res^{\bL^F}_{\bL_1^F}(\psi)$ is an integer
multiple of $\varphi_1$, which, by our choice of $\varphi_1$, has level $2$.
\end{proof}

In the following, we continue to use the notation established in
this section.

\begin{proof}[Proof of Theorem~\ref{thm:p=r}]
If $\lev(\psi) \le 1$, then both $\chi$ and $\psi$ are almost $r$-rational,
as in (1). Hence, we assume $\lev(\psi) \ge 2$. By Lemma~\ref{lem:p=r}, we
have $r = 2$, $\lev(\psi) = 2$, and $\bL$ has a simple component of type $B_2$
or $F_4$, denoted $\bL_1$ as in Lemma~\ref{lem:p=r}. Moreover,
$\bL_1^F \in \{\tw2B_2(2),\tw2F_4(2)\}$. Since $\bL_1$ is a direct factor of
$\bL$, there exists an $F$-stable decomposition
\[\bL = \bL_1 \times \bL'\]
and hence $\bL^F = \bL_1^F \times \bL'^F$.

As above, let $\varphi_1$ be an irreducible character of $\bL_1^F$ lying under
$\psi$ with $\lev(\varphi_1) = 2$. Then we can write
\[\psi = \varphi_1 \boxtimes \psi'\qquad\text{for some $\psi' \in
   \Irr(\bL'^F)$}.\]
\medskip

(A) Consider the case where $\bL_1$ is of type $F_4$. Note that the Dynkin
diagram of type $F_4$ cannot appear as a proper subdiagram of any other
irreducible Dynkin diagram. It follows that $\bL_1$ is a component of $\bG$.
As with $\bL_1$ in $\bL$, this $F_4$ component splits off as a direct factor
in $\bG$:
\[ \bG = \bL_1 \times \bG', \]
where $\bG'$ is $F$-stable and contains $\bL'$ as a Levi subgroup. Hence,
\[
\chi := R_{\bL}^{\bG}(\psi) = R_{\bL}^{\bG}(\varphi_1 \boxtimes \psi')
= R_{\bL_1}^{\bL_1}(\varphi_1) \boxtimes R_{\bL'}^{\bG'}(\psi') =
\varphi_1 \boxtimes R_{\bL'}^{\bG'}(\psi').
\]

We then have
\[ \QQ(\chi) = \QQ\left(\varphi_1, R_{\bL'}^{\bG'}(\psi')\right), \]
which implies that $\lev(\chi) \ge \lev(\varphi_1) = 2$. Since
$\lev(\chi)\le\lev(\psi) (= 2)$, by the character formula for Lusztig
induction mentioned above, we conclude that
\[ \lev(\chi) = \lev(\psi) = 2, \]
as claimed in (2) of the theorem.
\medskip

(B) Next, consider the case where $\bL_1$ is of type $B_2$. Then $\bL_1$ lies
in a simple component, say $\bG_1$, of $\bG$, which is of type $B_2$ or $F_4$.
As before, $\bG_1$ splits off as a direct factor of $\bG$. If $\bG_1$ is of
type $B_2$, the argument proceeds similarly as in (A). Hence, we may assume
that $\bG_1$ is of type $F_4$.

As above we have $\bG=\bG_1\times \bG'$ where $\bG'$ is $F$-stable having
$\bL'$ as a Levi subgroup. Therefore it suffices to assume that $\bG=\bG_1$ is
simple of type $F_4$. Now $\bL_1$ must be the only simple component of $\bL$.
It follows by Lemma~\ref{lem:p=r} that $\bG^F=\tw2F_4(2)$ and
$\bL_1^F=[\bL,\bL]^F=\tw2B_2(2)$.

Note that a Levi subgroup of $\tw2F_4(q^2)$ ($q^2=2^{2m+1}$ for
$m\in\ZZ_{\ge0}$) that contains a factor $\tw2B_2(q^2)$ is isomorphic to one
of the following:
\[ \tw2B_2(q^2)\times C_{q^2-1}, \tw2B_2(q^2)\times C_{q^2+\sqrt{2}q+1},
  \text{ or } \tw2B_2(q^2)\times C_{q^2-\sqrt{2}q+1}. \]
(See, for instance, \cite[Proposition~1.3]{Malle91}.) Note also that for all
$q^2 = 2^{2m+1}$, the Suzuki group $\tw2B_2(q^2)$ has exactly two
non-$2$-rational irreducible characters, both with field of values
$\QQ(\sqrt{-1})$ (see \cite{Suzuki62}). These are unipotent characters,
denoted by \[\lambda_1:=\tw2B_2[1,3]\quad \text{and}\quad
\lambda_2:=\tw2B_2[1,5]\] in the notation of \cite{Chevie} (and by $\tw2B_2[a]$
and $\tw2B_2[b]$ in \cite[p.~488]{Carter85}). Lusztig induction of unipotent
characters of Levi subgroups of $\tw2F_4(q^2)$ is known by \cite{BMM} and
available in \cite{GAP} using the \cite{Chevie} package.
\medskip

(i) The Levi subgroup $\bL^F\cong \tw2B_2(q^2)\times C_{q^2-1}$ is split, that
is, it is contained in an $F$-stable parabolic subgroup of $\bG$. Hence Lusztig
induction coincides with Harish-Chandra induction.
\smallskip

(ia) Suppose first that $\psi\in\Irr(\bL^F)$ is unipotent with
$\lev(\psi)=2$. Then $\psi$ must be the tensor product of
$\lambda_1$ or $\lambda_2$ with the trivial character of the torus
$T=\bZ(\bL^F)$ of order $q^2-1$. 
Then
\[
  R_\bL^\bG(\psi)=(\tw2B_2[1,3]:2)+(\tw2B_2[1,3]:1^2) \quad\text{or}\quad
  (\tw2B_2[1,5]:2)+(\tw2B_2[1,5]:1^2),
\]
in \cite{Chevie}. These constituents of $R_\bL^\bG(\psi)$ are unipotent
characters of $\tw2F_4(q^2)$ whose field of values is $\QQ(\sqrt{-1})$. Indeed,
a quick inspection of the known character table of $\tw2F_4(q^2)$ (see
\cite{Malle90}) reveals that the values of $(\tw2B_2[1,3]:2)$ and
$(\tw2B_2[1,5]:2)$ at a unipotent element with centralizer of order $4q^8$ are
$-q/\sqrt{2}\pm q^2\sqrt{-1}$, whereas both $(\tw2B_2[1,3]:1^2)$ and
$(\tw2B_2[1,5]:1^2)$ vanish at that element. It follows that the field of
values of $R_\bL^\bG(\psi)$ is $\QQ(\sqrt{-1})$, and hence its $2$-rationality
level is~2, as required.
\smallskip

(ib) Next suppose that $\psi\in\Irr(\bL^F)$ is the tensor product of
$\lambda_1$ or $\lambda_2$ with a nontrivial irreducible character,
say $\eta$, of the torus $T$. We identify both $\bL^F$ and $\bG^F$
with their dual groups. Viewing $\eta$ as a linear character of
$\bL^F$, let $s\in \bT^F$ correspond to $\eta$ under the natural
bijection between the group of all linear characters of $\bL^F$ and
$\bT^F$ (see \cite[Proposition~2.5.20]{GM20}). Then
$\bC_{\bG^F}(s)=\bL^F$, and $\psi$ lies in the Lusztig series
$\cE(\bL^F,s)$ labeled by $s$. By \cite[Theorem~3.3.22]{GM20},
Lusztig induction induces a bijection from $\cE(\bL^F,s)$ to
$\cE(\bG^F,s)$. In particular, $R_\bL^\bG(\psi)\in\Irr(\bG^F)$.
Moreover, the $\bG^F$-conjugacy class of an element of $T$ is the
orbit under inversion, so the characters $R_\bL^\bG(\psi)$ in
question are precisely $q^2-2$ irreducible characters of
$\tw2F_4(q^2)$ of degree $|\bG^F:\bL^F|_{r'}\,\lambda_i(1)
=(q/\sqrt{2})(q^2-1)(q^2+1)^2(q^4-q^2+1)(q^{12}+1)$. All of them
have $2$-rationality level $2$, and we are done as well.
\medskip

(ii) Consider the Levi subgroup $\bL^F\cong\tw2B_2(q^2)\times
C_{q^2+\sqrt{2}q+1}$.
\smallskip

(iia) Suppose first that $\psi\in\Irr(\bL^F)$ is unipotent with
$\lev(\psi)=2$. As before $\psi$ must be the tensor product of
$\lambda_1$ or $\lambda_2$ with the trivial character of the torus
$T_1=\bZ(\bL^F)$, of order $q^2+\sqrt{2}q+1$. Its Lusztig induction
$R_\bL^\bG(\psi)$ decomposes as
\[\begin{aligned}
  -\phi_{2,1}+\phi_{1,4}''&+\phi_{1,4}'+(\tw2B_2[1,3]:2)-(\tw2B_2[1,3]:1^2)\\
  &-\tw2F_4[-I]+\tw2F_4^2[I]-\tw2F_4^3[-1]+\tw2F_4^4[-1],
\end{aligned}\]
respectively,
\[\begin{aligned}
  -\phi_{2,1}+\phi_{1,4}''+&\phi_{1,4}'+(\tw2B_2[1,5]:2)-(\tw2B_2[1,5]:1^2)\\
  &-\tw2F_4[I]+\tw2F_4^2[-I]-\tw2F_4^3[-1]+\tw2F_4^4[-1],
\end{aligned}\]
again in the notation of \cite{Chevie}. Checking the character table of
$\tw2F_4(q^2)$, we observe that the value of $R_\bL^\bG(\psi)$ at a unipotent
element with centralizer of order $2q^{14}(q^2-1)(q^4+1)$ is irrational.
Therefore $\sqrt{-1}\in\QQ(R_\bL^\bG(\psi))$ and we are done in this case.
\smallskip

(iib) Next consider the case where $\psi\in\Irr(\bL^F)$ is the
tensor product of $\lambda_1$ or $\lambda_2$ with a nontrivial
irreducible character of the torus $T_1$. As in (ib), we have
$R_\bL^\bG(\psi)\in\Irr(\bG^F)$. Here, the relative Weyl group of
$T_1$ identifies with the Galois group
$\Gal(\FF_{q^8}/\FF_{q^2})\cong C_4$ and each $\bG^F$-conjugacy
classes of nontrivial elements of $T_1$ consists of exactly four
elements. So the Lusztig induced characters $R_\bL^\bG(\psi)$ in
question are indeed $(q^2+\sqrt{2}q)/2$ irreducible characters of
$\tw2F_4(q^2)$ of degree $|\bG^F:\bL^F|_{r'}\,\lambda_i(1)
=(q/\sqrt{2})(q^2-\sqrt{2}q+1)(q^4-1)^2(q^4-q^2+1)(q^8-q^4+1)$.
Again, they all have $2$-rationality level $2$, and we are done.
\medskip

(iii) Finally we note that when $q^2=2$, the Levi subgroup
$\bL^F\cong \tw2B_2(q^2)\times C_{q^2-\sqrt{2}q+1}$ coincides with
$\tw2B_2(q^2)\times C_{q^2-1}$, which has already been treated
above.
\end{proof}

\begin{remark}
We remark that the works of Geck~\cite{G03} and Tiep--Zalesskii~\cite{TZ04},
which rely on Lusztig's theory of character sheaves and on strong
rationality results for unipotent elements of finite reductive groups $\bG^F$,
respectively, imply that for defining characteristic $r>2$ all irreducible
characters of $\bG^F$ are almost $r$-rational. This is no longer true when
$r=2$, even if one restricts to characters of odd degree, as observed above.
\end{remark}

\subsection{A conjecture of Navarro and Tiep}
In the introduction, we mentioned Navarro--Tiep's conjecture
\cite[Conjecture~C]{Navarro-Tiep21} concerning the field of values
of Sylow restrictions. The conjecture asserts that if $\chi$ is a
$p'$-degree irreducible character of a finite group $G$ and
$P\in\Syl_p(G)$, then
\[ \QQ(\ze_p)(\Res_P^G(\chi))=\QQ(\ze_{\lev(\chi)}). \]
Of course, this is immediate if $\chi$ is almost $p$-rational. When
$\lev(\chi)\ge 2$, this equality is equivalent to
$\lev(\chi)=\lev(\chi_P)$ if $p$ is odd and, on the other hand, it
is equivalent to the combination of $\lev(\chi)=\lev(\chi_P)$ and
$\zeta_4\in\QQ(\chi_P)$ if $p=2$.

Here, we take the opportunity to prove the conjecture in the case
where $G$ is a finite reductive group and $p$ is its defining
characteristic. This provides additional evidence supporting the
conjecture, complementing the results already presented in
\cite[\S7]{Navarro-Tiep21}.

\begin{theorem}
 Let $\bG^F$ be a finite reductive group defined in characteristic~$r$. Let
 $P\in\Syl_r(\bG^F)$ and $\chi\in\Irr_{r'}(\bG^F)$. Then
 $\QQ(\ze_r)(\Res_P^{\bG^F}(\chi))=\QQ(\ze_{\lev(\chi)})$.
\end{theorem}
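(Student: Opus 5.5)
The plan is to reduce to the case $\lev(\chi)\ge 2$ and then combine Lemma~\ref{lem:p=r}, which forces a very restricted situation, with explicit knowledge of the Suzuki and Ree groups $\tw2B_2(2)$ and $\tw2F_4(2)$. The statement is read as $\QQ(\ze_r)(\Res_P(\chi))=\QQ(\ze_{r^{\lev(\chi)}})$.

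\textbf{Easy case.} If $\lev(\chi)\le 1$, then $\QQ(\chi)\subseteq\QQ(\ze_{r|G|_{r'}})$. The values of $\chi$ on $r$-elements lie in $\QQ(\ze_{|G|_r})$, and $\QQ(\ze_{|G|_r})\cap\QQ(\ze_{r|G|_{r'}})=\QQ(\ze_r)$. Hence $\QQ(\ze_r)(\chi_P)=\QQ(\ze_r)$, and we are done. So assume $\lev(\chi)\ge 2$.

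\textbf{Reduction to one component.} Apply Lemma~\ref{lem:p=r} with $\bL=\bG$ (taking $\bP=\bG$, so $\psi=\chi$ has $r'$-degree). This gives:
\begin{itemize}
\item $r=2$ and $\lev(\chi)=2$;
\item $\bG=\bG_1\times\bG'$ is an $F$-stable decomposition with $\bG_1^F\in\{\tw2B_2(2),\tw2F_4(2)\}$;
\item $\chi=\varphi_1\boxtimes\chi'$, where $\lev(\varphi_1)=2$ and $\chi'\in\Irr_{2'}(\bG'^F)$.
\end{itemize}
Since $\lev(\chi)=2$, any character $\chi'$ with $\lev(\chi')\ge2$ would have $\lev(\chi')=2$ by the same lemma. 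Either way, $\chi'$ is almost $2$-rational or has level $2$.

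Take $P=P_1\times P'$ with $P_1\in\Syl_2(\bG_1^F)$ and $P'\in\Syl_2(\bG'^F)$. Then $\chi_P=(\varphi_1)_{P_1}\boxtimes\chi'_{P'}$. Because $\varphi_1(1)$ and $\chi'(1)$ are odd, evaluating at elements $(x,1)$ and $(1,y)$ shows that $\QQ(\chi_P)$ is generated by $\QQ((\varphi_1)_{P_1})$ together with $\QQ(\chi'_{P'})$.

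The target is $\QQ(\ze_2)(\chi_P)=\QQ(\ze_4)=\QQ(\sqrt{-1})$. For this it suffices to show two things:
\begin{itemize}
\item $\QQ((\varphi_1)_{P_1})=\QQ(\sqrt{-1})$;
\item $\QQ(\chi'_{P'})\subseteq\QQ(\sqrt{-1})$.
\end{itemize}
The second holds because values on $2$-elements lie in $\QQ(\ze_{|G|_2})$, and level $\le2$ confines them to $\QQ(\ze_{4|G|_{2'}})\cap\QQ(\ze_{|G|_2})=\QQ(\ze_4)$.

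\textbf{Main obstacle.} It remains to verify $\QQ((\varphi_1)_{P_1})=\QQ(\sqrt{-1})$ for the odd-degree characters of level $2$ of $\tw2B_2(2)$ and $\tw2F_4(2)$. The containment $\subseteq$ follows as above, so the point is that $\sqrt{-1}$ actually appears as a value on some $2$-element.

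For $\tw2B_2(2)\cong C_5\rtimes C_4$, the relevant characters are the two nonreal linear characters of the $C_4$ quotient. Their values on the Sylow $C_4$ are $\pm\sqrt{-1}$, so this case is immediate.

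For $\tw2F_4(2)$, including the Tits group situation, I would check the \cite{Atlas} character table directly. Alternatively, use the unipotent description: the relevant characters are unipotent characters such as $(\tw2B_2[1,3]:2)$, which take the value $-q/\sqrt2\pm q^2\sqrt{-1}$ at a unipotent ($2$-)element, as quoted in the proof of Theorem~\ref{thm:p=r}. This table check is the step I expect to require the most care, since it must cover every odd-degree level-$2$ character.
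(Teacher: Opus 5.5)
Your proof is correct, but it moves $\sqrt{-1}$ from the bad component into $\QQ(\Res_P^{\bG^F}(\chi))$ by a different mechanism than the paper. You use parts (c)--(f) of Lemma~\ref{lem:p=r} (with $\bL=\bP=\bG$) to split $\bG^F=\bG_1^F\times\bG'^F$ and write $\chi=\varphi_1\boxtimes\chi'$. Then $\chi(x,1)=\chi'(1)\varphi_1(x)$ with $\chi'(1)\neq0$ puts the values of $\varphi_1$ on $P_1$ directly into $\QQ(\Res_P^{\bG^F}(\chi))$. The upper bound $\QQ(\Res_P^{\bG^F}(\chi))\subseteq\QQ(\ze_{|\bG^F|_2})\cap\QQ(\ze_{4|\bG^F|_{2'}})=\QQ(\ze_4)$ is the same as in the paper.

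The paper does not use the direct-factor splitting in this proof. It restricts $\chi$ to the odd-index normal subgroup $M=\pi(\bG_{sc}^F)$ and picks a constituent $\phi$ whose component $\lambda$ on $M_1$ has level $2$. It then takes a unipotent $u$ supported on $M_1$ with $\lambda(u_1)=\ze_4$. Clifford theory gives $\chi(u)=[\Res_M^{\bG^F}(\chi),\phi]\sum_{k=1}^t\phi_k(u)$, and since $t$ is odd and every $\phi_k(u)\in\{\pm a\ze_4\}$, no cancellation can occur, so $\chi(u)\in\ZZ\ze_4\setminus\{0\}$.

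The trade-off is this. The paper's route works only with the restriction to $M$, but it needs the stronger table fact that $\lambda$ equals $\pm\ze_4$ on all elements of $M_1$ of the same order as $u_1$, so that an odd count cannot cancel. Your route needs only that $\varphi_1$ takes some non-rational value on a $2$-element, at the cost of relying on the direct-factor decomposition supplied by Lemma~\ref{lem:p=r}.

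Two small points on your table check. The relevant group is the full $\tw2F_4(2)=\tw2F_4(2)'.2$, so the Tits group plays no separate role. Also, the unipotent values $-q/\sqrt2\pm q^2\sqrt{-1}$ quoted from the proof of Theorem~\ref{thm:p=r} concern only the two characters $(\tw2B_2[1,3]:2)$ and $(\tw2B_2[1,5]:2)$, not all eight odd-degree level-$2$ characters. The Atlas inspection is therefore what actually closes the argument, exactly as in the paper.
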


\begin{proof}
We use the notation and setup from the beginning of this subsection, but for
$(\bG,F)$ instead of $(\bL,F)$. In particular, if $\pi$ denotes the central
isogeny from $\bG_{sc}$ to $[\bG,\bG]$, then its restrictions to $\wt{\bG}_j$
induce corresponding isogenies $\wt{\bG}_j \to \bG_j$, and
\[ M:= \pi(\bG_{sc}^F) = \prod_{j\in \cJ}\pi(\wt{\bG}_j^{F^{l_j}}) \]
is a normal subgroup of $\bG^F$ of $r'$-index.

We may assume that $\lev(\chi)\ge 2$. By Lemma~\ref{lem:p=r}, we have
$\lev(\chi)=2$, and $\bG$ has an $F$-stable simple component among the $\bG_j$,
say $\bG_1$, such that
\[ \bG_1^F=\wt{\bG}_1^F\in \{\tw2B_2(2), \tw2F_4(2)\}. \]
We identify this component as follows. Let $\phi\in \Irr(M)$ be a
character lying under $\chi$. Then $\lev(\phi)=2$. Since $\phi$ is a
tensor product of irreducible odd-degree characters of the groups
$\pi(\wt{\bG}_j^{F^{l_j}})$ ($j\in \cJ$), one of
its factors must have level $2$. By Lemma~\ref{lem:Malle}, the
corresponding factor group is either $\tw2B_2(2)$ or
$\tw2F_4(2)$. We denote the $\pi(\bG_1^F)$-component of $\phi$ by
$\lambda$, so that $\lev(\lambda)=2$.

Note that $\tw2B_2(2)$ is a Frobenius group with kernel $K\cong
C_5$ and complement $H\cong C_4$, while $\tw2F_4(2)$ has trivial
center. Therefore
\[ M_1:=\pi(\bG_1^F)\cong \tw2B_2(2),\; C_4,\; \text{or } \tw2F_4(2). \]
Furthermore, $\tw2B_2(2)$ has two odd-degree irreducible
characters (of degree $1$) of level $2$, each with kernel exactly
$K$. Their fields of values are $\QQ(\ze_4)$, both as characters of
$\tw2B_2(2)$ and of $\tw2B_2(2)/K\cong H$. Similarly,
$\tw2F_4(2)$ has eight odd-degree irreducible characters (of
degrees $27$ and $351$) of level $2$, all with field of values
$\QQ(i)$. In particular, $\lambda\in \Irr(M_1)$ is such a character.
Moreover, there exists a unipotent element $u_1\in M_1$ such that
$\lambda(u_1)=\ze_4$, and for other elements of $M_1$ of the same order
as $u_1$, the value of $\lambda$ is either $\ze_4$ or $-\ze_4$.

Let $u \in M$ be the unipotent element whose projection to the
direct factor $M_1$ is $u_1$ and to all other factors is the
identity. Then
\[ \phi(u)=a\lambda(u_1)=a\ze_4, \]
where $a:=\phi(1)/\lambda(1)\in\ZZ_{\ge1}$. In fact, by the preceding paragraph
and the fact that each $\pi(\wt{\bG}_j^{F^{l_j}})$ is normal in $\bG^F$, it
follows that for every $\bG^F$-conjugate $u'$ of $u$ we have
\[ \phi(u')\in\{a\ze_4,-a\ze_4\}. \]

Let $\phi_1=\phi,\phi_2,\dots,\phi_t$ be the distinct
$\bG^F$-conjugates of $\phi$. By Clifford theory,
\[ \chi(u)=[\Res_M^{\bG^F}(\chi),\phi]\sum_{k=1}^t \phi_k(u). \]
Each $\phi_k(u)$ is the value of $\phi$ at a $\bG^F$-conjugate of
$u$, and hence equals either $a\ze_4$ or $-a\ze_4$, as noted above.
Moreover, $t$ divides $|\bG^F:M|$, which is odd. It follows that
$\chi(u)$ is a \emph{nonzero} integer multiple of $i$.

We may of course choose the Sylow $r$-subgroup $P$ so that $u\in P$.
Hence
\[ i\in \QQ(\Res_P^{\bG^F}(\chi)). \]
On the other hand,
\[
\QQ(\Res_P^{\bG^F}(\chi)) \subseteq \QQ(\ze_{|\bG^F|_2}) \cap
\QQ(\chi)\subseteq\QQ(\ze_{|\bG^F|_2}) \cap
\QQ(\ze_{4|\bG^F|_{2'}}) = \QQ(\ze_4).
\]
Therefore $\QQ(\Res_P^{\bG^F}(\chi))=\QQ(\ze_4)$, completing the
proof.
\end{proof}

\subsection{The case of $p$-solvable Levi and $F$-stable
parabolic}\label{subsec:F-stable-parabolic}


The following lemma is a consequence of Isaacs--Navarro's proof
\cite{IN} of \cite[Conjecture~C]{Navarro-Tiep21} that was considered
in the previous subsection.

\begin{lemma}   \label{lem:p-solvable}
 Let $p$ be a prime. Let $M$ be a $p$-solvable subgroup of $p'$-index of a
 group $G$ and $P\in\Syl_p(M)$. Let $\psi\in\Irr(M)$ of $p'$-degree with
 $\lev(\psi)\ge 2$, and set $\chi:=\Ind_M^G(\psi)$.
 Then $\lev(\psi)=\lev(\chi)$.
\end{lemma}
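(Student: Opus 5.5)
The plan is to reduce to Lemma~\ref{lem:1}(1). We need to know that $\lev(\psi)=\ell\bigl(\Res^M_P(\psi)\bigr)$, where $P\in\Syl_p(M)$. Since $p\nmid|G:M|$, $P$ is also a Sylow $p$-subgroup of $G$, so $\Res^G_P(\psi)$ in Lemma~\ref{lem:1} just means $\Res^M_P(\psi)$. Once this equality is established, Lemma~\ref{lem:1} gives
\[\lev(\psi)=\lev(\chi)=\lev\bigl(\Res^G_P(\chi)\bigr),\]
which is in particular the claimed equality $\lev(\psi)=\lev(\chi)$.

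To get the equality, I would use the Isaacs--Navarro theorem \cite{IN}. For a $p$-solvable group $M$ and $\psi\in\Irr_{p'}(M)$, they show that $\QQ(\ze_p)(\Res^M_P(\psi))=\QQ(\ze_{p^{\lev(\psi)}})$ (Conjecture~C of \cite{Navarro-Tiep21}). More precisely, their argument produces a linear constituent $\lambda$ of $\Res^M_P(\psi)$ with the following two properties:
\begin{enumerate}[\rm(a)]
 \item the constituents of $\Res^M_P(\psi)$ of level $\lev(\psi)$ have total degree prime to $p$;
 \item no constituent has level larger than $\lev(\psi)$.
\end{enumerate}
This is exactly the $\ell$-invariant statement. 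So the first step is to extract from \cite{IN} the statement $\ell\bigl(\Res^M_P(\psi)\bigr)=\lev(\psi)$ whenever $\lev(\psi)\ge2$.

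Property (b) is elementary. Every irreducible constituent $\mu$ of $\Res^M_P(\psi)$ has $\QQ(\mu)\subseteq\QQ(\ze_{|P|})$. Also $\QQ(\Res^M_P\psi)\subseteq\QQ(\psi)$, and Galois conjugation permutes the constituents. Hence, if some constituent had level $j>\lev(\psi)$, the group $\Gal\bigl(\QQ(\ze_{|M|})/\QQ(\ze_{p^{j-1}|M|_{p'}})\bigr)$, which fixes $\psi$, would permute the level-$j$ constituents in orbits of $p$-power size greater than $1$. That alone does not exclude such constituents; it only forces $\Delta_j(\Res^M_P\psi)(1)\equiv0\pmod p$ for every $j>\lev(\psi)$. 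This is all that is needed, since $\ell$ is a maximum over indices $i$ with $p\nmid\Delta_i(1)$, and those levels then never count.

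Property (a) is the main obstacle: showing $p\nmid\Delta_{\lev(\psi)}\bigl(\Res^M_P\psi\bigr)(1)$. I would first try to quote the key proposition of \cite{IN}, which is the $p$-solvable analogue of \cite[Lemma~4.2]{HS26}, and which is where the $\ell$-invariant originates in that paper. If it is not stated in that form, I would reprove it by the Isaacs--Navarro induction. Choose $N\trianglelefteq M$ and use Clifford theory with a $p'$-degree constituent $\theta$ of $\Res^M_N\psi$ that is $P$-invariant. Then pass to the stabilizer, and use $p'$-index induction, which is handled by Lemma~\ref{lem:3}. Using also the Gajendragadkar/$\pi$-special factorization $\psi=\alpha\beta$ with $\alpha$ $p$-special and $\beta$ $p'$-special, one reduces to $p$-special characters. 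For these the restriction to $P$ has a unique linear constituent of $p'$-multiplicity carrying the full level.

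Combining (a) and (b) gives $\ell\bigl(\Res^M_P\psi\bigr)=\lev(\psi)$, and Lemma~\ref{lem:1}(1) then finishes the proof.
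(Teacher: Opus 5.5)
Your proposal is correct and follows the paper's route: obtain $\ell\bigl(\Res^M_P(\psi)\bigr)=\lev(\psi)$ from Isaacs--Navarro, then apply Lemma~\ref{lem:1}(1). The precise input is \cite[Theorem~2.2]{IN}, which gives $\QQ(\ze_{mp})(\psi)=\QQ(\ze_{mp},\ze_{p^{\al}})$ with $m=|G|_{p'}$ and $\al=\ell\bigl(\Res^M_P(\psi)\bigr)$; comparing $p$-rationality levels and using $\lev(\psi)\ge 2$ yields $\al=\lev(\psi)$ at once, so your separate argument for (b) (which is just Lemma~\ref{lem:2}) and your fallback reproof sketch are not needed.
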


\begin{proof}
As before we write $|G|=p^{a}m$, where $a\in\ZZ_{\ge 0}$ and
$m\in\ZZ_{\ge 1}$ is coprime to $p$. Under the assumptions that $M$
is $p$-solvable and that $\psi$ has $p'$-degree, using
\cite[Theorem~2.2]{IN}, we have
\[
\QQ(\ze_{mp})(\psi) =
\QQ\!\left(\ze_{mp},\,\ze_{p^{\al}}\right),
\]
where \[\al:=\ell\left(\Res_P^G(\psi)\right).\]

Recall that, for an abelian extension $\KK$ of $\QQ$ we write $c(\KK)$ for
the smallest positive integer $n$ such that $\KK\subseteq
\QQ(\ze_n)$, and we define $\lev(\KK):=\nu_p\bigl(c(\KK)\bigr)$.
Since $\lev(\psi)\ge 2$, we obtain
\[ \lev\left(\QQ(\ze_{mp})(\psi)\right)=\lev(\psi). \]
On the other hand,
\[
\lev\!\left(\QQ\!\left(\ze_{mp},\,\ze_{p^{\al}}\right)\right)
=\max\left\{1,\al\right\}.
\]
It follows that $\al\ge 2$, and indeed
$\lev(\psi)=\al=\ell\left(\Res_P^G(\psi)\right)$. The lemma now
follows from Lemma~\ref{lem:1}(i).
\end{proof}

\begin{theorem}   \label{thm:p-solvable}
 Conjecture~\ref{conj:Lusztig-induction} holds true in the case the
 parabolic subgroup $\bP$ is $F$-stable and $\bL^F$ is $p$-solvable.
\end{theorem}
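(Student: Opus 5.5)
When $\bP$ is $F$-stable, Harish-Chandra induction is inflation followed by induction. So I will prove $\lev(\chi)=\lev(\psi)$ for
\[\chi:=R_{\bL\le\bP}^{\bG}(\psi)=\Ind_{\bP^F}^{\bG^F}(\Psi),\qquad \Psi:=\Infl_{\bL^F}^{\bP^F}(\psi).\]
The natural tool is Lemma~\ref{lem:p-solvable}, applied with $M:=\bP^F$ and $G:=\bG^F$.

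First I record the elementary facts. Since $\bP^F=\bU^F\rtimes\bL^F$ with $\bU^F$ the unipotent radical, inflation does not change the field of values, so $\QQ(\Psi)=\QQ(\psi)$ and $\lev(\Psi)=\lev(\psi)$. The induction formula gives $\QQ(\chi)\subseteq\QQ(\Psi)$, hence $\lev(\chi)\le\lev(\psi)$ always. Consequently, if $\lev(\psi)\le1$ then both characters are almost $p$-rational. This also covers the case $\max\{\lev(\psi),\lev(\chi)\}\ge2$ with $\lev(\psi)\le 1$: that situation cannot occur, because it would force $\lev(\chi)\ge2>\lev(\psi)$. So I may assume $\lev(\psi)\ge2$.

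Next I check the hypotheses of Lemma~\ref{lem:p-solvable}.
\begin{enumerate}[\rm(a)]
\item The degree $\chi(1)=|\bG^F:\bP^F|\,\psi(1)$ is prime to $p$ by assumption. Hence $p\nmid|\bG^F:\bP^F|$ and $p\nmid\Psi(1)=\psi(1)$.
\item The group $\bP^F$ must be $p$-solvable. If $p=r$, $\bU^F$ is an $r$-group; otherwise it is a $p'$-group. In either case $\bU^F$ is $p$-solvable, since it is nilpotent. As $\bP^F/\bU^F\cong\bL^F$ is $p$-solvable by hypothesis, $\bP^F$ is $p$-solvable.
\end{enumerate}
Lemma~\ref{lem:p-solvable} then applies to $\Psi\in\Irr(\bP^F)$ and yields $\lev(\chi)=\lev(\Psi)=\lev(\psi)$, as required.

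The main obstacle is step (b), making sure $\bP^F$ really is $p$-solvable, since that is where the extension from $\bL^F$ to $\bP^F$ happens. The case $p=r$ could alternatively be handled by Theorem~\ref{thm:p=r}, but the uniform argument above already suffices. I would also double-check that Lemma~\ref{lem:p-solvable} only needs $p\nmid|G:M|$, not $p$-solvability of $G$; as stated, it does.
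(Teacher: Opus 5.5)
Your proof is correct and follows the paper's argument. You inflate $\psi$ to $\bP^F$, read off $p\nmid|\bG^F:\bP^F|$ and $p\nmid\psi(1)$ from $\chi(1)=|\bG^F:\bP^F|\,\psi(1)$, and apply Lemma~\ref{lem:p-solvable} with $M=\bP^F$. There are two minor differences. You verify explicitly that $\bP^F$ is $p$-solvable (because $\bU^F$ is an $r$-group), which the paper uses without comment. You also handle $p=r$ with the same lemma instead of citing Theorem~\ref{thm:p=r}; this is legitimate because Lemma~\ref{lem:p-solvable} holds for arbitrary finite groups.
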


\begin{proof}
When the parabolic subgroup $\bP$ is $F$-stable, one has the Levi
decomposition
\[ \bP^F = \bU^F \rtimes \bL^F, \]
where $\bU$ is the unipotent radical of $\bP$. In this situation,
Lusztig induction coincides with Harish-Chandra induction from
$\bL^F$ to $\bG^F$ (see \cite[Proposition~3.3.3]{GM20}). More
precisely,
\[ \chi := R_{\bL\subseteq\bP}^{\bG}(\psi) \]
is obtained from $\psi$ by first inflating it to $\bP^F$ via the
isomorphism $\bL^F \cong \bP^F/\bU^F$, and then inducing to $\bG^F$.

Let $\varphi$ denote the inflation of $\psi$ to $\bP^F$. Clearly,
$\varphi$ has the same field of values as $\psi$, and hence the same
$p$-rationality level:
\[ \lev(\psi) = \lev(\varphi). \]

If $\lev(\psi) \le 1$, then both $\psi$ and $\chi$ are almost
$p$-rational, and we are done. If $p=r$ then we are also done by
Theorem~\ref{thm:p=r}. Thus we may assume that $\lev(\psi) \ge 2$
and $p\neq r$. Note that
$\chi(1)=\pm|\bG^F:\bL^F|_{r'}\psi(1)=\pm|\bG^F : \bP^F|\psi(1)$.
The assumption on $\chi$ being of $p'$-degree implies that $\psi$
also has $p'$-degree and $|\bG^F : \bP^F|$ is prime to $p$. By
Lemma~\ref{lem:p-solvable}, it follows that
\[ \lev(\varphi) = \lev(\chi). \]
We have shown that $\lev(\psi) = \lev(\chi)$, as required.
\end{proof}

At this time we are not able to determine whether the conclusion of
Lemma~\ref{lem:p-solvable} remains valid without the assumption that
$M$ is $p$-solvable. If this were the case, then the proof of
Theorem~\ref{thm:p-solvable} would show that
Conjecture~\ref{conj:Lusztig-induction} holds when the parabolic
subgroup $\bP$ is $F$-stable.


\end{document}